\documentclass[11pt]{amsart}
\usepackage[a4paper, margin=1.4in]{geometry}

\usepackage[utf8]{inputenc}
\usepackage[T1]{fontenc}
\usepackage{lmodern}

\usepackage{amssymb}
\usepackage{mathrsfs}

\usepackage[dvipsnames]{xcolor}
\usepackage[shortlabels]{enumitem}
\usepackage{needspace}
\usepackage{tikz-cd}
\usepackage{etoolbox}

\usepackage[numbers]{natbib}
\usepackage[colorlinks=true,linkcolor=cyan,citecolor=cyan,urlcolor=cyan]{hyperref}
\usepackage[capitalize]{cleveref}

% To make labels work properly when one is custom inside an enumerate.
\newcounter{dummyitem}
\makeatletter
\newcommand\myitem[1][]{\item[#1]\refstepcounter{dummyitem}\def\@currentlabel{#1}}

 % Remove indent from addresses
\patchcmd{\@setaddresses}{\indent}{\noindent}{}{}
\patchcmd{\@setaddresses}{\indent}{\noindent}{}{}
\patchcmd{\@setaddresses}{\indent}{\noindent}{}{}
\patchcmd{\@setaddresses}{\indent}{\noindent}{}{}
\makeatother

\DeclareRobustCommand{\SkipTocEntry}[5]{} 

% Default enumerate should be upshape.
\setlist[enumerate,1]{label={\upshape(\arabic*)}}

% Theorems etc
\newcommand{\ACtext}{}
\newenvironment{AC}{\renewcommand{\ACtext}{*}}{}

\newtheorem{lemma}{{\ACtext}Lemma}[section]
\newtheorem{theorem}[lemma]{{\ACtext}Theorem}
\newtheorem{proposition}[lemma]{{\ACtext}Proposition}
\newtheorem{corollary}[lemma]{{\ACtext}Corollary}

\theoremstyle{definition}
\newtheorem{definition}[lemma]{Definition}
\newtheorem{remark}[lemma]{Remark}
\newtheorem{example}[lemma]{Example}

% Symbols, operators, etc
\renewcommand{\O}{\mathsf O}
\renewcommand{\P}{\mathcal P}
\newcommand{\F}{\mathscr F}
\newcommand{\B}{\mathscr B}

\newcommand{\C}{\mathsf C}
\newcommand{\Sat}{\mathsf{S}}
\newcommand{\R}{\mathsf{R}}

\newcommand{\FiltSE}{\mathsf{Filt_{SE}}}

\newcommand{\s}{\mathsf s}

\newcommand{\upset}{\mathord{\uparrow}}
\newcommand{\downset}{\mathord{\downarrow}}
\renewcommand{\diamond}{\lozenge}
\newcommand{\pprec}{\mathrel{{\prec}\mkern-5mu{\prec}}}

\newcommand{\half}{\mathchoice
{\raisebox{.3ex}{\scalebox{.7}{$\displaystyle\frac12$}}}
{\raisebox{.25ex}{\scalebox{.7}{$\textstyle\frac12$}}}
{\raisebox{.2ex}{\scalebox{.7}{$\scriptstyle\frac12$}}}
{\raisebox{.15ex}{\scalebox{.7}{$\scriptscriptstyle\frac12$}}}
}
\newcommand{\D}{{\half}}
\newcommand{\creg}{{3\half}}

% Categories

% Abbreviations
\let\se\subseteq

\let\bve\bigvee
\let\bwe\bigwedge
\let\Om\Omega
\let\up\upset

\let\fse\FiltSE
\let\S\Sat

\title{Local compactness does not always imply spatiality}
\author{G.~Bezhanishvili}
\author{S.~D.~Melzer}
\author{R.~Raviprakash}
\author{A.~L.~Suarez}

\address{New Mexico State University, USA}
\email{guram@nmsu.edu}
\email{smelzer@nmsu.edu}
\email{prakash2@nmsu.edu}

\address{University of the Western Cape, South Africa}
\email{annalaurasuarez993@gmail.com}

\subjclass[2020]{54D45; 54D30; 54D10; 06E25; 06D22; 18F70; 03E25}
\keywords{Local compactness, compactness, separation axioms, pointfree topology, axiom of choice}

\begin{document}

\begin{abstract}
    It is a well-known result in pointfree topology that every locally compact frame is spatial. Whether this result extends to MT-algebras (McKinsey--Tarski algebras)  was an open problem. We resolve it in the negative by constructing a locally compact sober MT-algebra which is not spatial. 
    We also revisit N\"obeling's largely overlooked approach to pointfree topology from the 1950s. We show that his separation axioms are closely related to those in the theory of MT-algebras with the notable exception of Hausdorffness. We prove that N\"obeling's Spatiality Theorem  implies the well-known Isbell Spatiality Theorem. We then generalize N\"obeling's Spatiality Theorem by proving that each locally compact $T_\D$-algebra is spatial. The proof utilizes the fact that every nontrivial $T_\D$-algebra contains a closed atom, which we show is equivalent to the axiom of choice.
\end{abstract}

\maketitle

\tableofcontents

\section{Introduction}

Pointfree topology seeks to describe topological notions  without direct reference to points. The predominant modern approach is through the theory of frames or locales (see, e.g., \cite{PP12}), which generalize lattices of open sets of  topological spaces. This perspective emerged in the mid-20th century 
and became mainstream after Isbell's influential paper  \cite{Isb72}, which is considered the birth of pointfree topology. Soon after, the first monograph of the subject appeared  \cite{Joh82}, to which we refer for further details and history.

An alternative pointfree approach to topology emerged even earlier, in the celebrated paper by McKinsey and Tarski \cite{MT14}, who generalized Kuratowski's
closure operators on powersets
\cite{Kur22} to 
closure operators on boolean algebras. 
Their work initiated a line of research that passed through Dummett and Lemmon \cite{DL59}, Rasiowa and Sikorski \cite{RS63}, and many others, eventually leading to  the Blok--Esakia theorem \cite{Blo76,Esa76}, becoming a central thread in the study of modal logic (see, e.g., \cite{CZ97}). Yet, this approach largely disappeared from pointfree topology, and was only recently reintroduced in \cite{BR23} through the formalism of McKinsey--Tarski algebras or MT-algebras for short.

Between these developments lies another forgotten contribution: N\"obeling’s 1954 book \cite{Noeb54} (see also \cite{Noeb48,Noeb51,Noeb53}),
which proposed a general theory of topological structure on posets equipped with 
closure operators, of which the MT-algebras are a special case.
In particular, N\"obeling developed 
pointfree versions of separation axioms \cite[Sec.~11]{Noeb54} and local compactness \cite[Sec.~12]{Noeb54}, but his work was largely overlooked as pointfree topology became increasingly presented in the language of frames.

One of our aims is to
revisit N\"obeling’s separation axioms from the perspective of MT-algebras, comparing them to the modern formulations 
given in \cite{BR23}. We show that while most separation axioms align, Nöbeling’s version of Hausdorffness (see \cite[p.~79]{Noeb54}) differs considerably as it does not even imply the $T_1$-separation.
However, we show that it is closely related to the notion of Hausdorffness in frames (see, e.g., \cite[Sec.~III.3]{PP21}).
We also revisit N\"obeling’s notion of local compactness (see \cite[p.~105]{Noeb54}), which generalizes the notion of 
each point having a compact neighborhood, and compare it to the MT-version (see \cite[Def.~4.1]{BR25}), which generalizes the notion of 
each neighborhood of a point containing a compact neighborhood of the point. Paralleling what happens in topological spaces, 
the latter implies the former, with the two being equivalent for Hausdorff MT-algebras. 

The main results of this paper concern spatiality. By N\"obeling's Spatiality Theorem \cite[12.5]{Noeb54}, each compact MT-algebra satisfying the $T_1$-separation is spatial. 
We show that this result implies the well-known Isbell Spatiality Theorem \cite[Thm.~2.1]{Isb72} that each compact subfit frame is spatial. We then generalize the former 
by proving that each locally compact MT-algebra satisfying the $T_\D$-separation is spatial. This we do by establishing that each nontrivial compact MT-algebra satisfying the $T_\D$-separation contains a closed atom, a statement we prove is equivalent to the axiom of choice. 
Based on this, one might expect that the result on spatiality of 
locally compact frames (see, e.g., \cite[Sec.~VII.6]{PP12}) would generalize to MT-algebras. 
We show that this is not the case
by constructing a locally compact sober MT-algebra which is not spatial, thereby confirming the suspicion of \cite[Rem.~4.14]{BR25}. As a byproduct, we answer the first two open problems at the end of \cite{BR23} in the negative.

These results reveal that the spatiality of 
locally compact frames is not a general feature of pointfree topology, but rather a consequence of the limited expressive power of the  formalism of frames. 
Indeed, as follows from \cite{BR23,BR+25}, frames correspond to those MT-algebras that satisfy the $T_\D$-separation,  explaining why every locally compact frame is spatial:  because every locally compact MT-algebra satisfying the $T_\D$-separation is spatial. But the latter result no longer holds if we weaken $T_\D$ to $T_0$, and even strengthening $T_0$ to sober is not sufficient to obtain spatiality.  

\section{Preliminaries}

With each topological space $X$ we associate two lattices: 
the lattice of open sets $\Omega(X)$ and the powerset lattice
$\P(X)$ equipped with
topological interior.
The first leads to the notion of frame and the second to that of MT-algebra.

\needspace{2em}
\begin{definition}
\leavevmode
\begin{enumerate}
    \item A \emph{frame} is a complete lattice in which finite meets distribute over arbitrary joins.
    \item An \emph{MT-algebra} is a complete boolean algebra $M$ equipped with a unary function $\square \colon M \to M$  satisfying Kuratowski's axioms for interior:
    \begin{itemize}
        \item $\square 1 = 1$;
        \item $\square (a \wedge b) = \square a \wedge \square b$;
        \item $\square a \leq a$;
        \item $\square a  \leq \square \square a$.
    \end{itemize}
\end{enumerate}
\end{definition}

We call a frame \emph{spatial} if it is isomorphic to $\Omega(X)$ for some topological space $X$, and an MT-algebra \emph{spatial} if it is isomorphic to $\P(X)$ equipped with its interior operator. Then, a frame is spatial iff $a \nleq b$ implies that there is a completely prime filter containing $a$ but missing $b$ (see, e.g., \cite[Prop.~I.5.1]{PP12}), and an MT-algebra is spatial iff it is atomic (see, e.g., \cite[Thm.~3.22]{BR23}).

Frames and MT-algebras
are closely related.
For any MT-algebra $M$, the image $\O(M)$ of the interior operator $\square$ forms a frame, whose elements are called \emph{open} elements of $M$. 
If $M$ is spatial then so is $\O(M)$ (see \cite[Prop.~4.11]{BR23}), but the converse is not true in general.\label{sentence spatial}
 Going the other way, with each frame $L$ we can associate an MT-algebra $\F L$, called the {\em Funayama envelope} of $L$. It is constructed as 
 the MacNeille completion of the boolean envelope of $L$, where the interior operator $\square$ on $\F L$ 
 is given by the right adjoint of the embedding $L \to \F L$ and $\O(\F L) \cong L$ (see \cite[Sec.~4]{BR23}).\footnote{Alternatively, $\F L$ can be constructed as the booleanization of the frame of nuclei of $L$ (see \cite[Thm.~3.1]{BGJ13}).} 
Thus, every frame arises as the collection of open elements $\O(M)$ of some MT-algebra $M$.

To characterize those MT-algebras that arise as $\F L$ for some frame $L$, we 
recall how the separation axioms in topology generalize to MT-algebras. This will also play a crucial role in the next section, when we compare Nöbeling’s formulations with the ones mentioned here. 
We begin by introducing some terminology.
Let $M$ be an MT-algebra. 
We denote by $\diamond := \neg \square \neg $ the closure operator on $M$, and by $\C(M)$ the image of $\diamond$. The elements of $\C(M)$ are called \emph{closed} elements of $M$. Following the standard topological terminology, an element $a\in M$ is {\em saturated} if it is a meet of open elements. We denote the set of saturated elements by $\S(M)$.

\begin{definition}
    Let $M$ be an MT-algebra and $a \in M$. Then
    \begin{enumerate}
        \item $a$ is a \emph{$T_0$-element} if $a = s \wedge c$ for some $s \in \S(M)$ and $c \in \C(M)$;
        \item $a$ is a \emph{$T_\D$-element} if $a = u \wedge c$ for some $u \in \O(M)$ and $c \in \C(M)$;
        \item $a$ is a \emph{$T_1$-element} if $a$ is closed;
        \item $a$ is a \emph{$T_2$-element} if $a = \bigwedge\{\diamond u \mid a \leq u \in \O(M)\}$.
    \end{enumerate}
\end{definition}

We recall that a subset $S$ of a complete lattice $L$
is \emph{join-dense} if every $a \in L$ is a join
from $S$, and it is  \emph{meet-dense} if every $a \in L$ is a meet from $S$. 

\begin{definition}[Lower separation axioms]
    Let $M$ be an MT-algebra. For $i = 0,\D,1,2$, we say that $M$ is a {\em $T_i$-algebra} provided the $T_i$-elements form a join-dense subset of $M$.
\end{definition}

\begin{definition}\label{completelyrelation}
Let $a, b$ be elements of an MT-algebra $M$. We write $a \prec b$ if $\diamond a \leq \square b$, and $a \pprec b$ if there is a family $\{u_p\} \subseteq \O(M)$, indexed by $p \in [0,1] \cap \mathbb{Q}$, such that
\[
a \leq u_0 \leq u_p \prec u_q \leq u_1 \leq b \quad \text{for all } p < q.
\]
\end{definition}

\begin{remark}\label{dyadic rationals}
Recall that a \emph{dyadic rational} is a rational of the form $\frac{m}{2^n}$ where $m \in \mathbb{Z}$ and $n \in \mathbb{N}$.  The dyadic rationals in $[0,1]$ are those with $0 \leq m \leq 2^n$. Since these form a countable dense subset of $[0,1]$, the family $\{u_p\}$ in \cref{completelyrelation} may be taken to range over dyadic rationals in $[0,1]$.
\end{remark}

Two elements $a, b$ of a boolean algebra are said to be \emph{disjoint} if $a \wedge b = 0$.

\begin{definition}[Higher separation axioms]
    Let $M$ be a $T_1$-algebra. Then
    \begin{enumerate}
        \item $M$ is a {\em $T_3$-algebra} if $u = \bigvee\{v \in \O(M) \mid v \prec u\}$ for all $u \in \O(M)$;
        \item $M$ is a {\em $T_\creg$-algebra} if $u = \bigvee\{v \in \O(M) \mid v \pprec u\}$ for all $u \in \O(M)$;
        \item $M$ is a {\em $T_4$-algebra} if for all disjoint $c,d \in \C(M)$ there exist disjoint $u,v \in \O(M)$ such that $c \leq u$ and $d \leq v$.
    \end{enumerate}
\end{definition}

\begin{remark} \label{Ti-space iff Ti-algebra}
The separation axioms for MT-algebras faithfully extend the ones for topological spaces, i.e., a topological space $X$ is a $T_i$-space iff
the MT-algebra $\P(X)$ is a $T_i$-algebra (see \cite{BR23}).
\end{remark}
The following result characterizes exactly which MT-algebras arise as the Funayama envelope of a frame:

\begin{theorem}[{\cite[Thm.~6.5]{BR23}}] 
\leavevmode
    For an MT-algebra $M$, $M \cong \F \O(M)$ iff 
    $M$ is a $T_\D$-algebra.
    \label{thm: FL Td}
\end{theorem}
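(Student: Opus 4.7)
The plan is to leverage the explicit construction of $\F L$ as the MacNeille completion of the boolean envelope $B(L)$ of the frame $L$, with interior given by the right adjoint of $L \hookrightarrow \F L$. Throughout, I use the observation that in any MT-algebra every closed element has the form $\neg u$ for some $u \in \O(M)$ (since $\diamond = \neg \square \neg$), so a $T_\D$-element is precisely a locally closed element $u \wedge \neg v$ with $u, v \in \O(M)$.

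For the direction ($\Leftarrow$), assume $M$ is a $T_\D$-algebra and let $B \subseteq M$ be the boolean subalgebra generated by $\O(M)$. Every $T_\D$-element already lies in $B$, so join-density of $T_\D$-elements in $M$ yields join-density of $B$ in $M$. Applying this to $\neg b$ for arbitrary $b \in M$ and taking complements, $B$ is also meet-dense in $M$, so $M$ is (isomorphic to) the MacNeille completion of $B$. On the other hand, the universal property of the boolean envelope supplies a canonical boolean homomorphism $B(\O(M)) \to B$ extending the inclusion $\O(M) \hookrightarrow M$; this map is surjective by construction and injective because $B(\O(M))$ is the minimal boolean extension of the bounded lattice $\O(M)$. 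Thus $M$ is a MacNeille completion of $B(\O(M))$, yielding a boolean isomorphism $M \cong \F \O(M)$. Since in both algebras $\square$ is characterized as the right adjoint of the embedding of the frame of open elements, this isomorphism automatically preserves $\square$, hence is an MT-isomorphism.

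For the direction ($\Rightarrow$), it suffices to show that $\F L$ is a $T_\D$-algebra for every frame $L$. Using the disjunctive normal form in the boolean envelope, every element of $B(L)$ is a finite join of elements of the shape
\[
u_1 \wedge \cdots \wedge u_m \wedge \neg v_1 \wedge \cdots \wedge \neg v_n = \left(\bigwedge_{i} u_i\right) \wedge \neg\left(\bigvee_{j} v_j\right),
\]
which is a $T_\D$-element of $\F L$ (the meet of an open and a closed element). Hence $T_\D$-elements are join-dense in $B(L)$, and since $B(L)$ is join-dense in its MacNeille completion $\F L$, they are join-dense in $\F L$. Consequently, if $M \cong \F \O(M)$ then $M$ is a $T_\D$-algebra.

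The step requiring the most care is the injectivity of $B(\O(M)) \to B$ in the backward direction: one must verify that no unexpected boolean relations arise inside the ambient algebra $M$ beyond those forced by $\O(M)$ being a bounded sublattice. This rests on the universal/minimal characterization of the boolean envelope, and is the point where I would argue most carefully, since join-density of $B$ in $M$ alone would not rule out such collapses.
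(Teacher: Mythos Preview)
The paper does not give its own proof of this theorem; it is quoted from \cite[Thm.~6.5]{BR23}. Your argument is correct and follows the natural line. The injectivity you single out at the end is not a genuine gap but a standard fact about boolean envelopes of distributive lattices: if $D$ is a bounded distributive sublattice of a boolean algebra $A$, then the boolean subalgebra of $A$ generated by $D$ is canonically isomorphic to the boolean envelope of $D$ (any nonzero element of the envelope has a DNF term $d \wedge \neg e$ with $d \not\le e$ in $D$, and an injective lattice homomorphism reflects this inequality, so the image term is also nonzero). The present paper invokes precisely this fact later, in the proof of \cref{T_0}, citing \cite[p.~99]{BD74}, when it establishes the parallel characterization of $T_0$-algebras via Raney extensions using the same MacNeille-completion strategy you outline here. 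So your flagged concern is dispatched by a citation, and otherwise your proof is the expected one.
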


This result has the following corollary for 
$T_1$-algebras. 
Recall (see, e.g., \cite[p.~79]{PP12}) that a frame $L$ is \emph{subfit} if for all $a,b \in L$, from $a \nleq b$ it follows that there is $c \in L$ such that $a \vee c = 1$ and $b \vee c \neq 1$.

\begin{theorem}[{\cite[Thm.~6.17]{BR23}}] \label{thm: T1 subfit}
    Let $L$ be a frame. Then
        $\F L$ is $T_1$ iff $L$ is subfit.
\end{theorem}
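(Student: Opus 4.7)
The plan is to use the boolean structure of $\F L$ to reformulate $T_1$ as: every nonzero element of $\F L$ contains a nonzero closed element. This is because if $a' := \bigvee\{c \in \C(\F L) \mid c \leq a\} < a$, then $a \wedge \neg a'$ is a nonzero element with no nonzero closed subelement, and conversely. Using $\O(\F L) \cong L$, the closed elements of $\F L$ are exactly the boolean complements $\neg u$ with $u \in L$, and $\neg u \leq a$ is equivalent to $a \vee u = 1$. Throughout I will use that the embedding $L \hookrightarrow \F L$ preserves finite joins, meets, $0$, and $1$, so that for $a,u \in L$ the join $a \vee u$ and its value coincide whether computed in $L$ or in $\F L$.

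For the direction $\F L$ is $T_1 \Rightarrow L$ is subfit: given $a \nleq b$ in $L$, I form $a \wedge \neg b \neq 0$ in $\F L$ and apply the reformulation to obtain $u \in L$ with $u \neq 1$ and $\neg u \leq a \wedge \neg b$. Then $a \vee u = 1$, $b \leq u$, and $b \vee u = u \neq 1$, so $u$ witnesses the subfit condition for the pair $(a,b)$.

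For $L$ subfit $\Rightarrow \F L$ is $T_1$, I will invoke two standard facts about the Funayama envelope: the free boolean extension $B(L)$ is join-dense in its MacNeille completion $\F L$, and each element of $B(L)$ admits a disjunctive normal form $\bigvee_i (x_i \wedge \neg y_i)$ with $x_i, y_i \in L$ (using that $L$ is closed under finite meets and joins). Given $0 \neq a \in \F L$, I pick a nonzero $b \in B(L)$ with $b \leq a$ and a nonzero DNF summand $x \wedge \neg y \leq b$; nonvanishing forces $x \nleq y$ in $L$. Subfit then supplies $c \in L$ with $x \vee c = 1$ and $y \vee c \neq 1$. Setting $u := y \vee c \in L$, we have $u \neq 1$, while $\neg u = \neg y \wedge \neg c \leq \neg y$ and, using $\neg c \leq x$ (from $x \vee c = 1$), also $\neg u \leq x$. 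Hence $\neg u \leq x \wedge \neg y \leq a$ is a nonzero closed element below $a$, as required.

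The only non-routine ingredient is the use of the disjunctive normal form in $B(L)$ together with the join-density of $B(L)$ in $\F L$; these are standard consequences of the construction but need to be quoted carefully from the definition of $\F L$. Everything else is a direct manipulation of the boolean complement and the defining property of subfitness.
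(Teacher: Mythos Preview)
The paper does not supply its own proof of this theorem: it is quoted verbatim from \cite[Thm.~6.17]{BR23} and used as a black box, so there is nothing in the present paper to compare your argument against.

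That said, your argument is correct and is essentially the standard one. The reformulation of $T_1$ as ``every nonzero element dominates a nonzero closed element'' is valid in any complete boolean algebra, and the two directions then reduce, exactly as you describe, to manipulating the identities $\neg u \leq a \iff a \vee u = 1$ together with the DNF decomposition of elements of the free boolean extension $B(L)$ and its join-density in $\F L$. The only places to be careful are the ones you already flag: that $L \hookrightarrow \F L$ preserves finite joins (so $x \vee c = 1$ in $L$ really gives $\neg c \leq x$ in $\F L$), and that join-density of $B(L)$ in its MacNeille completion lets you reduce to DNF elements. Both are standard features of the Funayama construction.
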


This will be used 
in \cref{sec:4} to derive Isbell's Spatiality Theorem from Nöbeling's Spatiality Theorem.

\section{N\"obeling's separation axioms}
In this section, we compare N\"obeling's separation axioms to those of the previous section. N\"obeling only dealt with the separation axioms between $T_1$ and $T_4$ (including $T_\creg$).\footnote{N\"obeling also had $T_5$, but we don't discuss it here.} As we will see, his approach is largely equivalent to that of the previous section, with the Hausdorff separation axiom being a notable exception in that the N\"obeling version of it is strictly weaker. On the other hand, it is N\"obeling's version that is closely related to Hausdorffness in frames.  
\begin{definition}
    For an MT-algebra $M$, the N\"obeling separation axioms are defined as follows:
    \begin{enumerate}[label=\textup{(NT$_{\arabic*}$)}, leftmargin=!, labelwidth=3em]
        \item If $a,b \in M$ with $a \nleq b$, then there is $u\in \O(M)$ with $a \nleq u$ and $b \leq u$. \label{NT1}
        
        \item If $a,b \in M$ are nonzero and disjoint, then there are disjoint $u,v \in \O(M)$ such that both $a \wedge u$ and $b \wedge v$ are nonzero. \label{NT2} 
        
        \item If $a \in M$ is nonzero and $c \in \C(M)$ is disjoint from $a$, then there are disjoint $u,v \in \O(M)$ with $a \wedge u$ nonzero and $c \leq v$. \label{NT3} 
        \myitem[{\upshape(NT\ensuremath{_\creg})}] If $a \in M$ is nonzero and $c \in \C(M)$ is disjoint from $a$, then there is a family $\{u_p\} \subseteq \O(M)$,  indexed by dyadic rationals in $[0,1]$, such that 
        $u_p \prec u_q$ for $p < q$, $a \wedge u_0$ is nonzero, and $c$ is disjoint from $u_1$.\label{NT31/2}
        
        \item If $c,d \in \C(M)$ are disjoint, then there are disjoint $u,v \in \O(M)$ such that $c \leq u$ and $d \leq v$.\label{NT4}
    \end{enumerate}
\end{definition}

Observe that $M$ satisfies \ref{NT1} iff $\O(M)$ is meet-dense in $M$, which is equivalent to $\C(M)$ being join-dense in $M$. We thus obtain:

\begin{proposition}
    An MT-algebra $M$ is a $T_1$-algebra iff $M$ satisfies \ref{NT1}.
\end{proposition}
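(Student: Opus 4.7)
The plan is to unwind definitions and invoke the observation immediately preceding the proposition. By Definition, the $T_1$-elements of $M$ are precisely the closed elements, so $M$ is a $T_1$-algebra if and only if $\C(M)$ is join-dense in $M$. The observation preceding the proposition asserts the equivalences
\[
M \text{ satisfies }\ref{NT1} \iff \O(M) \text{ meet-dense in } M \iff \C(M) \text{ join-dense in } M,
\]
so the proposition is essentially already established. I would simply spell out why those two equivalences hold.

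First, I would verify that $\O(M)$ is meet-dense iff $\C(M)$ is join-dense. Since $M$ is a boolean algebra and complementation reverses order, $a = \bigvee\{c \in \C(M) \mid c \leq a\}$ if and only if $\neg a = \bigwedge\{\neg c \mid c \in \C(M),\ c \leq a\}$. Because $\neg$ is a bijection between $\C(M)$ and $\O(M)$ (via $\neg\diamond = \square\neg$), the right-hand side rewrites as $\neg a = \bigwedge\{u \in \O(M) \mid \neg a \leq u\}$. As $a$ ranges over $M$, so does $\neg a$, giving the equivalence.

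Second, I would verify that $\O(M)$ is meet-dense iff \ref{NT1} holds. For the forward direction, suppose $\O(M)$ is meet-dense and $a \nleq b$. Then $a \nleq \bigwedge\{u \in \O(M) \mid b \leq u\}$, so some $u \in \O(M)$ with $b \leq u$ satisfies $a \nleq u$, which is \ref{NT1}. Conversely, assume \ref{NT1} and set $b' := \bigwedge\{u \in \O(M) \mid b \leq u\}$. Trivially $b \leq b'$. If $b' \nleq b$, then \ref{NT1} (applied with $a := b'$) yields $u \in \O(M)$ with $b \leq u$ and $b' \nleq u$, contradicting $b' \leq u$ by the definition of $b'$.

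There is no real obstacle here; the main subtlety is just bookkeeping with complementation and the fact that $\neg$ restricts to an order-reversing bijection $\C(M) \to \O(M)$. Combining the three equivalences yields the proposition.
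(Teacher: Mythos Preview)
Your proposal is correct and follows exactly the route indicated by the paper: the paper simply records the observation that \ref{NT1} is equivalent to meet-density of $\O(M)$, which in turn is equivalent to join-density of $\C(M)$, and states the proposition as an immediate consequence. You have merely spelled out the details of those two equivalences, which the paper leaves implicit.
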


We next show the same for $i = 3,\creg,4$.

\begin{proposition}
    Let $M$ be a $T_1$-algebra. 
    \begin{enumerate}
        \item $M$ is a $T_3$-algebra iff $M$ satisfies \ref{NT3}.
        \item $M$ is a $T_\creg$-algebra iff $M$ satisfies \textup{\ref{NT31/2}}.
        \item $M$ is a $T_4$-algebra iff $M$ satisfies \ref{NT4}.
    \end{enumerate}
\end{proposition}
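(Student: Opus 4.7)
The three parts share a common structure, but (3) is essentially trivial: since $M$ is assumed to be a $T_1$-algebra, the condition (NT$_4$) is verbatim the additional requirement in the definition of a $T_4$-algebra, so both implications hold by inspection. For (1) and (2) the plan is to shuttle between open and closed elements via complementation, using that meet-density of $\O(M)$ (guaranteed by $T_1$) lets us detect a nonzero element by an open below it.

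For the forward direction of (1), given $a\neq 0$ and $c\in\C(M)$ with $a\wedge c=0$, I would set $w=\neg c\in\O(M)$, note $a\leq w$, and apply the $T_3$ hypothesis to write $w=\bigvee\{v\in\O(M)\mid v\prec w\}$. Since $a\neq 0$ and $M$ is boolean, at least one such $v$ must satisfy $a\wedge v\neq 0$. From $v\prec w$, i.e.\ $\diamond v\leq \neg c$, I read off the two disjoint opens $v$ and $\square\neg v$, with $\square\neg v$ containing $c$, to witness (NT$_3$). Part (2) runs the same way with $\pprec$ replacing $\prec$: apply $T_\creg$ to $\neg c$ to obtain $v\pprec\neg c$ meeting $a$, and observe that the interpolating family $\{u_p\}$ produced by $\pprec$ already satisfies $u_0\geq v$ (hence $a\wedge u_0\neq 0$) and $u_1\leq \neg c$ (hence $c\wedge u_1=0$), as required by (NT$_{3\half}$).

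For the reverse direction, fix an open $u$, set $u':=\bigvee\{v\in\O(M)\mid v\prec u\}$ (and similarly with $\pprec$ for part~(2)), and assume toward contradiction that $u\nleq u'$. Then $a:=u\wedge\neg u'\neq 0$, while $c:=\neg u\in\C(M)$ is disjoint from $a$. Applying (NT$_3$) produces disjoint opens $w$ and $v'$ with $a\wedge w\neq 0$ and $c\leq v'$; then $w\leq\neg v'$, and $\neg v'$ is closed and below $u$, so $\diamond w\leq u$, giving $w\prec u$ and hence $w\leq u'$, contradicting $a\wedge u'=0$. For part~(2) the family $\{u_p\}$ provided by (NT$_{3\half}$) itself witnesses $u_0\pprec u$ since $u_1\leq u$, and $a\wedge u_0\neq 0$ again contradicts $a\wedge u'=0$.

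The only place to be careful is the dualization via complementation: one repeatedly uses that $\neg$ swaps $\square$ and $\diamond$ and that $v\wedge w=0$ with $v,w$ open forces $v\leq\neg w$ to be closed, so that $\diamond v\leq\neg w$. Once this identity is in hand the three equivalences reduce to straightforward translation, and I do not anticipate any serious obstacle beyond careful bookkeeping of complements.
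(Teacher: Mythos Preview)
Your proposal is correct and follows essentially the same approach as the paper: part~(3) is immediate, and for (1) and (2) you argue by contradiction via $a:=u\wedge\neg u'$ in one direction and by applying the hypothesis to $\neg c$ in the other, exactly as the paper does for (2). The only cosmetic difference is that the paper dispatches (1) by citing N\"obeling~[11.9] rather than writing out the argument, whereas you supply the direct proof (which is simply the $T_3$ analogue of the paper's proof of (2)).
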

\begin{proof}
    (1) By \cite[11.9]{Noeb54}, $M$ satisfies \ref{NT3} iff $u = \bigvee\{v \in \O(M) \mid v \prec u\}$ for all $u \in \O(M)$, yielding the result.

    (2) It is sufficient to show that $M$ satisfies \ref{NT31/2} iff $u = \bigvee\{v \in \O(M) \mid v \pprec u\}$ for all $u \in \O(M)$. First suppose $M$ satisfies \ref{NT31/2}. Let $u\in \O(M)$ and set $u'=\bigvee \{v \in \O(M) \mid v \pprec u \}$. If $u \neq u'$, then $a := u \wedge \neg u$ is nonzero. 
    Since $\neg u$ is disjoint from $a$ and $\neg u \in \C(M)$, by \ref{NT31/2} there is a family $\{u_p\} \subseteq \O(M)$, indexed by dyadic rationals in $[0,1]$, such that $u_p \prec u_q$ for $p <q$, $a \wedge u_0$ is nonzero, and $\neg u$ is disjoint from $u_1$. Therefore, for $p < q$ we have $u_0 \leq u_p \prec u_q \leq u_1 \leq u$. Thus, $u_0 \pprec u$ by \cref{dyadic rationals}. Hence, $u_0 \leq u'$, which implies that $u_0 \wedge a \leq u' \wedge a=0$, a contradiction. Consequently, $u=u'$. 
    
    For the reverse implication, 
    suppose $a \in M$ is nonzero and $c \in \C(M)$ is disjoint from $a$.
    Then $a \leq \neg c$ and $\neg c \in \O(M)$. By assumption,  $\neg c = \bigvee \{v \in \O(M) \mid {v \pprec \neg c} \}$.
    If $a$ is disjoint from all such $v$,
    then 
    \[
    a \wedge \bigvee \{v \in \O(M) \mid v \pprec \neg c\} = 0 = a \wedge \neg c,
    \]
    a contradiction.
    Thus, there is $v \pprec \neg c$ such that $a \wedge v$ is nonzero and $v$ is disjoint from $c$.
    From $v \pprec \neg c$, by \cref{dyadic rationals}, it follows that there is a family $\{u_p\} \subseteq \O(M)$ indexed by dyadic rationals such that 
    \[
    v \leq u_0 \leq u_p \prec u_q \leq u_1 \leq \neg c \quad \text{for all } p < q.
    \]
Consequently, $a \wedge u_0$ is nonzero and $c$ is disjoint from $u_1$, and hence $M$ satisfies \ref{NT31/2}.
    (3) This is immediate since the two definitions coincide.
\end{proof}

We also have that each $T_2$-algebra satisfies \ref{NT2}:

\begin{proposition} \label{prop: T2 implies NT2}
    If $M$ is a $T_2$-algebra, then $M$ satisfies \ref{NT2}.
\end{proposition}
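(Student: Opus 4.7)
The plan is to leverage the join-density of $T_2$-elements together with the explicit $T_2$-decomposition $a' = \bigwedge\{\diamond u \mid a' \leq u \in \O(M)\}$, and then pass to complements to produce the desired disjoint opens.

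First, since $T_2$-elements are join-dense in $M$ and $a \neq 0$, there must exist a nonzero $T_2$-element $a' \leq a$: if every $T_2$-element below $a$ were zero, then $a$ would be a join of zeros, contradicting $a \neq 0$. Note $b$ remains disjoint from $a'$ since $a' \leq a$, so $b \leq \neg a'$.

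Next, apply the $T_2$-decomposition to $a'$ and take complements, obtaining
\[
\neg a' = \bigvee \{\square \neg u \mid a' \leq u \in \O(M)\}.
\]
Since $M$ is a complete Boolean algebra (and in particular a frame), meets distribute over joins, so from $b \leq \neg a'$ we get
\[
b = b \wedge \neg a' = \bigvee \{b \wedge \square \neg u \mid a' \leq u \in \O(M)\}.
\]
Because $b \neq 0$, at least one of the joinands must be nonzero: there is an open $u$ with $a' \leq u$ such that $b \wedge \square \neg u \neq 0$.

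Finally, set $v := \square \neg u$. Then $v \in \O(M)$ and $u \wedge v \leq u \wedge \neg u = 0$, so $u$ and $v$ are disjoint opens. Moreover, $a \wedge u \geq a' \wedge u = a' \neq 0$ (using $a' \leq u$ and $a' \leq a$), and $b \wedge v \neq 0$ by construction. This witnesses \ref{NT2}.

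The argument is essentially routine once one unpacks the $T_2$-condition, with the only real step being the distributivity move in the second paragraph; no serious obstacle arises, as everything follows from complete Boolean-algebra calculus applied to the defining equation of a $T_2$-element.
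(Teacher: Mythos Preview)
Your proof is correct and follows essentially the same approach as the paper: both pick a nonzero $T_2$-element below $a$, use its defining meet $\bigwedge\{\diamond u \mid a' \le u \in \O(M)\}$ to locate an open $u$ with $a' \le u$ and $b \nleq \diamond u$, and then take $v = \square\neg u = \neg\diamond u$. The only cosmetic difference is that you pass to complements and invoke frame distributivity explicitly, whereas the paper argues the same point by contradiction (if $b \le \diamond u$ for all such $u$, then $b \le a' \le a$).
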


\begin{proof}
Let $a, b \in M$ be nonzero and disjoint.
Since $M$ is a $T_2$-algebra, there is a nonzero $T_2$-element $c$ underneath $a$. 
If $b \leq \diamond u $ for all $u \in \O(M) $ with $c \leq u$, then $b \leq \bigwedge\{ \diamond u \mid c \leq u \in \O(M) \} = c \le a$, 
a contradiction. Therefore, there is $u \in \O(M)$ such that $b \nleq \diamond u$ and $c \leq u$. 
Set $v= \neg \diamond u$. Then $v\in\O(M)$, $v$ is disjoint from $u$, and both $u \wedge a$ and $b \wedge v$ are nonzero. Thus, $M$ satisfies \ref{NT2}.
\end{proof}

On the other hand, the converse of \cref{prop: T2 implies NT2} fails. 
Indeed, the following example shows that an MT-algebra may satisfy \ref{NT2} without even being a $T_1$-algebra.

\begin{example}
    Let $B_0$ be a nontrivial complete atomless boolean algebra. Set $B = B_0 \times B_0$ and define $\square : B \to B$ by $\square(a,b) = (a \wedge b, b)$. Observe that $M := (B, \square)$ is an MT-algebra:
    \begin{itemize}
        \item $\square(1,1)=(1,1)$;
        \item $\square(a,b)=(a \wedge b, b) \leq (a,b)$;
        \item $\square \square(a,b) = \square (a \wedge b, b)=(a \wedge b, b) = \square(a,b)$;
        \item 
        $\begin{aligned}[t]
            \square (a_1,&a_2) \wedge \square (b_1,b_2) 
            = (a_1 \wedge a_2, a_2) \wedge (b_1  \wedge b_2, b_2) \\
            &= (a_1 \wedge a_2 \wedge b_1  \wedge b_2, a_2 \wedge b_2) 
            = (a_1 \wedge b_1 \wedge a_2 \wedge b_2, a_2 \wedge b_2) \\
            &= \square (a_1 \wedge b_1,a_2 \wedge b_2) 
            = \square \big((a_1,a_2) \wedge (b_1,b_2)\big).
        \end{aligned}$
    \end{itemize}
    
    Moreover, the open and closed elements of $M$ are easy to describe:
    $(a,b) \in \O(M)$ iff $a \leq b$, and hence $(a,b) \in \C(M)$ iff $(\neg a, \neg b) \in \O(M)$ iff $b \leq a$. 

    To see that $M$ is not a $T_1$-algebra, note that $(1,0)$ has no proper open elements above it since $(1,0) \leq (a,b)$ implies $a = 1$, and if $(a,b)$ is open then $a \leq b$, so $(a,b) = (1,1)$.

    To see that $M$ satisfies \ref{NT2}, let $m = (a_1, a_2)$ and $n = (b_1, b_2)$ be nonzero and disjoint.
     There are four cases possible, and in each 
     we will find disjoint $u, v \in \O(M)$ such that both $m \wedge u$ and $n \wedge v$ are nonzero.
    \begin{itemize}
        \item If $m,n \in \O(M)$, then we can set $u = m$ and $v = n$.
        \item If $m,n \not \in \O(M)$, then $a_1 \nleq a_2$ and $b_1 \nleq b_2$, so both $a_1$ and $b_1$ are nonzero. Moreover, $a_1$ and $b_1$ are disjoint since $m$ and $n$ are. 
        Thus, we can set $u = (a_1, a_1)$ and $v = (b_1, b_1)$.
        \item If $m \in \O(M)$ and $n \notin \O(M)$, then $a_1 \leq a_2$ but $b_1 \nleq b_2$. We have the following subcases:
        \begin{itemize}
            \item If $a_2$ and $b_1$ are disjoint, then we can set $u = (a_2, a_2)$ and $v = (b_1, b_1)$.
            \item If $a_2$ and $b_1$ are not disjoint, then (since $B_0$ is atomless) there is some nonzero $c \in B_0$ such that $c < a_2 \wedge b_1$. Let $d= \neg c \wedge a_2 \wedge b_1$. Then $d$ is nonzero and disjoint from $c$. Thus, we can set $u = (c, c)$ and $v = (d,d)$.
        \end{itemize}
        \item If $n \in \O(M)$ and $m \notin \O(M)$, then we proceed similarly to the previous case.
    \end{itemize}
\end{example}

   As we have just seen, \ref{NT2} is too weak to imply even the $T_1$-separation on MT-algebras. 
   It does, however, impose additional structure on the frame of opens. We recall
   (see, e.g., \cite[p.~330]{PP12}) that a frame $L$ is \emph{Hausdorff} provided every $a \in L \setminus \{1\}$ can be written as
\[
a = \bigvee \{ u \in L \mid u \leq a \text{ and } u^* \nleq a \}.\footnote{As is customary, $u^*$ denotes the pseudocomplement $u^* = \bigvee \{v \in \O(M) \mid u \wedge v = 0\}$.}
\]

\begin{theorem} \label{NT2 vs H}
    Let $M$ be an MT-algebra.
    \begin{enumerate} [ref=\thetheorem(\arabic*)]
        \item If $M$ satisfies \ref{NT2}, then $\O(M)$ is Hausdorff. \label[theorem]{NT2 implies Hausdorff frame}
        \item If $M$ is a $T_1$-algebra and $\O(M)$ is Hausdorff, then $M$ satisfies \ref{NT2}.
    \end{enumerate}
\end{theorem}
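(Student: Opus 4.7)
The plan for part (1) is to fix $a \in \O(M) \setminus \{1\}$, set $a' := \bigvee\{u \in \O(M) \mid u \leq a \text{ and } u^* \nleq a\}$, and show $a' = a$ by contradiction. Supposing $a' < a$, the element $a \wedge \neg a'$ is nonzero in $M$, and $\neg a$ is also nonzero (because $a \neq 1$), and the two are disjoint. So \ref{NT2} supplies disjoint opens $u, v$ with $u \wedge a \wedge \neg a' \neq 0$ and $v \wedge \neg a \neq 0$. The element I want to feed into the defining family of $a'$ is $w := u \wedge a$: it is open (as a meet of two opens), lies below $a$, and satisfies $w \nleq a'$ since $w \wedge \neg a' \neq 0$. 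For the remaining condition $w^* \nleq a$, I would observe that $v \wedge w \leq v \wedge u = 0$ forces $v \leq w^*$, while $v \nleq a$ (from $v \wedge \neg a \neq 0$); hence $w^* \geq v \nleq a$. This puts $w$ into the defining family of $a'$, contradicting $w \nleq a'$.

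For part (2), the plan is to use $T_1$ to manufacture a proper open to which the Hausdorff formula can be applied. Given nonzero disjoint $a, b \in M$, join-density of $\C(M)$ produces a nonzero $c \in \C(M)$ with $c \leq a$. Then $\neg c \in \O(M) \setminus \{1\}$ and $b \leq \neg a \leq \neg c$. Hausdorffness gives $\neg c = \bigvee T$ with $T = \{w \in \O(M) \mid w \leq \neg c \text{ and } w^* \nleq \neg c\}$. Distributivity in $M$ together with $0 \neq b \leq \neg c$ forces some $v \in T$ with $b \wedge v \neq 0$. The condition $v^* \nleq \neg c$ unwinds to $v^* \wedge c \neq 0$; expanding $v^* = \bigvee\{w \in \O(M) \mid w \wedge v = 0\}$ and distributing once more produces an open $u$ disjoint from $v$ with $c \wedge u \neq 0$, so $a \wedge u \neq 0$. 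The pair $u, v$ witnesses \ref{NT2}.

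The main subtlety in part (1) is that the open $u$ handed over by \ref{NT2} need not lie below $a$; this is repaired by replacing it with $u \wedge a$, which is legal precisely because $a$ itself is open. The main subtlety in part (2) is that Hausdorffness is stated only for proper elements of the frame, whereas $a, b$ are arbitrary elements of $M$; the $T_1$ hypothesis lets me slide down to a nonzero closed $c \leq a$, converting the disjointness $a \wedge b = 0$ into a containment $b \leq \neg c$ with $\neg c$ a proper open, which is exactly the input shape needed by the Hausdorff expansion. Apart from these two moves, the work is routine bookkeeping between Boolean complements in $M$ and pseudocomplements in $\O(M)$.
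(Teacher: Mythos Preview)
Your argument is correct and follows essentially the same route as the paper's proof: part (1) is identical up to notation, and part (2) differs only in that the paper reduces \emph{both} $a$ and $b$ to closed elements (using $\neg b$ as the proper open), whereas you reduce only $a$. Your final distribution step in (2) is superfluous, since $v^*$ is itself an open disjoint from $v$ with $v^* \wedge c \neq 0$; you can take $u = v^*$ directly, as the paper does.
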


\begin{proof}
    (1) Let $1 \neq a \in \O(M)$. Suppose $a \neq \bigvee \{b \in \O(M) \mid b \leq a, b^* \nleq a\}$. Then $t := \bigvee \{b \in \O(M) \mid b \leq a, b^* \nleq a\} < a$. Therefore, $a \wedge \neg t$ and $\neg a$ are nonzero. Since $a \wedge \neg t$ and $\neg a$ are disjoint, \ref{NT2} yields disjoint $u, v \in \O(M)$ such that both
    $u \wedge (a \wedge \neg t)$ and $v \wedge \neg a$ are nonzero. Since $u,v$ are disjoint, $v \leq \square \neg u $, which gives that \[
    v \leq \square (\neg u \vee \neg a) = \square\neg(u \wedge a) = (u \wedge a)^*
    \] 
    (because the pseudocomplement in $\O(M)$ is calculated by $s^* = \square\neg s$; see, e.g., \cite[p.~125]{RS63}). Observe that $(u \wedge a)^* \nleq a$ since $v \nleq a$. Thus, $u \wedge a \leq t$, which implies that $u \wedge a \wedge \neg t=0$, a contradiction.

    (2) Let $a, b \in M$ be nonzero and disjoint. Since $M$ is a $T_1$-algebra, we may assume that $a$ and $b$ are 
    closed. Then $\neg b$ is open, and 
    from $a \wedge b = 0$ and $\O(M)$ being Hausdorff it follows that
    \[
        a \leq \neg b = \bigvee \{u \in \O(M) \mid u \leq \neg b \mbox{ and } u^* \nleq \neg b\}.
    \] 
    Therefore, there is $u \in \O(M)$ such that $u \wedge a$ is nonzero and $u^* \nleq \neg b$. The latter gives that $u^* \wedge b$ is nonzero.
    Thus, $M$ satisfies \ref{NT2}.
\end{proof}

As an immediate consequence of \cref{NT2 vs H}, we obtain:

\begin{corollary}
    Let $M$ be a $T_1$-algebra. Then $M$ satisfies \ref{NT2} iff $\O(M)$ is a Hausdorff frame.
\end{corollary}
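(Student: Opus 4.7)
The plan is to obtain the corollary as a direct consequence of \cref{NT2 vs H}, which already packages both implications; the $T_1$ hypothesis is only needed in the reverse direction.

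For the forward direction, suppose $M$ satisfies \ref{NT2}. Then \cref{NT2 implies Hausdorff frame} applies without any separation hypothesis and gives that $\O(M)$ is Hausdorff. For the reverse direction, suppose $M$ is a $T_1$-algebra and $\O(M)$ is Hausdorff. Then the second part of \cref{NT2 vs H} applies verbatim and yields that $M$ satisfies \ref{NT2}.

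Since the entire content of the corollary is absorbed into the two clauses of \cref{NT2 vs H}, no further argument is required and there is no real obstacle: the only subtlety is remembering that part (1) of the theorem is unconditional, so the $T_1$ hypothesis in the corollary is genuinely used only for the implication from Hausdorffness of $\O(M)$ back to \ref{NT2}. The proof can therefore be written in one or two sentences citing \cref{NT2 vs H}.
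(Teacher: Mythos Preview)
Your proposal is correct and matches the paper's approach exactly: the paper states the corollary as ``an immediate consequence of \cref{NT2 vs H}'' with no further proof, and your observation that only the reverse direction uses the $T_1$ hypothesis is accurate.
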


\begin{remark} \label{rem: NT2 and T2}
The above corollary suggests that a weaker notion of Hausdorffness for an MT-algebra would be to satisfy \ref{NT1} and \ref{NT2}. In fact, this is how Nöbeling defined Hausdorffness pointfree \cite[p.~80]{Noeb54}. While this notion is useful because it captures Hausdorffness of the frame of opens, it is strictly weaker than being a $T_2$-algebra. However,  finding  
a counterexample is more involved (and uses essentially the fact, communicated to us by Alan Dow, that each compact Hausdorff extremally disconnected space is realized as the remainder of the Stone--\v{C}ech compactification of some completely regular  extremally disconnected space). The details will be presented elsewhere.
\end{remark}

\section{Local compactness, spatiality, and AC} \label{sec:4}

By the well-known Isbell Spatiality Theorem \cite[Thm.~2.1]{Isb72}, each compact subfit frame is spatial. We show
that this theorem follows from an earlier result of Nöbeling  \cite[12.5]{Noeb54}, which in the setting of MT-algebras states that each compact $T_1$-algebra is  spatial. 
We then generalize the latter by proving that already locally compact $T_\D$-algebras are spatial. This we do by showing that every nontrivial compact $T_\D$-algebra contains a closed atom, which we prove is equivalent to the axiom of choice (AC for short). Whether any of the spatiality theorems of this section are equivalent to AC remains open.  

We start by recalling that there are various definitions of local compactness in topology, which are all equivalent for Hausdorff spaces. We will mainly work with the following two:
\begin{enumerate}[\textup{(LC\arabic*)}]
    \item Each point has a compact neighborhood. \label{NLC}
    \item For each point $x$ and each open neighborhood $U$ of $x$, there is a compact neighborhood of $x$ contained in $U$. \label{LC}
\end{enumerate}

Clearly \ref{LC} implies \ref{NLC}. The converse is not true in general, but it is true for Hausdorff spaces (see, e.g., \cite[p.~130]{Wil70}). Following the standard usage in non-Hausdorff topology (see, e.g., \cite[p.~44]{GH+03} or \cite[p.~92]{GL13}), we call a space {\em locally compact} if it satisfies \ref{LC}. 

The MT-version of \ref{NLC} is defined in \cite[p.~105]{Noeb54}, and that of \ref{LC} in \cite[Def.~4.1]{BR25}.
We recall that an element $k$ of an MT-algebra $M$ is  \emph{compact} if for each family $U \subseteq \O(M)$ with $k \leq \bigvee U$, there is a finite subfamily $V \subseteq U$ such that $k \leq \bigvee V$. Then $M$ is \emph{compact} if its top element is compact. This agrees with the usual notion of compactness for frames, where a frame $L$ is \emph{compact} if $1 = \bigvee U$ implies $1 = \bigvee V$ for some finite $V \subseteq U$ (see, e.g., \cite[p.~125]{PP12}). Thus, $M$ is compact iff $\O(M)$ is compact.\label{compact sentence}

\begin{remark}
    Nöbeling essentially defined compactness using the finite intersection property for closed elements \cite[p.~95]{Noeb54}, but a standard argument shows that in the setting of MT-algebras his definition is equivalent to the one given above (see, e.g., \cite[Lem.~3.6]{BR25}).
\end{remark}
    We point out that most classical results about compact subsets extend to MT-algebras. For example, a closed element underneath a compact element is compact (see \cite[Lem.~6.11]{BR25}). We will use these results freely in what follows, often without explicit mention.

For $a,b \in M$, set $a \lhd b$ if there is a compact $k \in M$ such that $a \leq k \leq b$. 

\begin{definition} 
We say that an MT-algebra $M$ is 
\begin{enumerate}
    \item
    \emph{N-locally compact} if for each nonzero $a$, there is a compact $k \in M$ such that $a \wedge \square k$ is nonzero;
    \item
    \emph{locally compact} if $u = \bigvee \{ v \in \O(M) \mid v \lhd u \}$ for each $u \in \O(M)$.
\end{enumerate}
\end{definition}

The following is straightforward to verify.

\begin{lemma} \label{X LC iff PX LC}
    Let $M = \P(X)$ be a spatial MT-algebra.
    \begin{enumerate}
        \item $M$ is N-locally compact iff $X$ satisfies \ref{NLC}.
        \item $M$ is locally compact iff $X$ satisfies \ref{LC}.
    \end{enumerate}
\end{lemma}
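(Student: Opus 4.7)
The plan is to translate each condition on $M = \P(X)$ into its evident topological meaning on $X$, using the standard dictionary: nonzero elements are nonempty subsets, $\square$ is the topological interior, open elements of $M$ are open subsets of $X$, and compact elements of $M$ are compact subsets of $X$. The key extra ingredient is that $\P(X)$ is atomic, with atoms $\{x\}$ for $x \in X$, so quantifying over nonzero elements is essentially equivalent to quantifying over points.

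For (1), unfolding the definition, $M$ is N-locally compact exactly when for every nonempty $A \se X$ there is a compact $K \se X$ with $A \cap \intr(K) \neq \emptyset$. First I would show the forward direction: given $x \in X$, apply the assumption to the nonempty set $\{x\}$ to obtain a compact $K$ with $x \in \intr(K)$, which is a compact neighborhood of $x$, so $X$ satisfies \ref{NLC}. For the converse, given nonempty $A$, pick any $x \in A$ and take the compact neighborhood $K$ of $x$ provided by \ref{NLC}; then $x \in A \cap \intr(K)$, so this intersection is nonempty.

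For (2), I would first note that in $\P(X)$ the relation $V \lhd U$ for opens $U,V$ means there exists a compact $K \se X$ with $V \se K \se U$. For the forward direction, suppose $M$ is locally compact and let $x \in X$ with $U$ an open neighborhood. Since $U = \bigcup\{V \in \Om(X) \mid V \lhd U\}$, some $V$ in this family contains $x$; choosing a witness $K$ compact with $V \se K \se U$, we have $x \in V \se \intr(K)$, so $K$ is a compact neighborhood of $x$ contained in $U$. For the converse, given an open $U$ and any $x \in U$, \ref{LC} supplies a compact neighborhood $K_x$ of $x$ with $K_x \se U$; then $\intr(K_x)$ is open, $\intr(K_x) \lhd U$, and $x \in \intr(K_x)$. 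Taking the union over $x \in U$ gives $U = \bigcup_{x \in U} \intr(K_x) \se \bigvee\{V \in \Om(X) \mid V \lhd U\} \se U$, so equality holds.

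There is no real obstacle here; both parts are essentially bookkeeping translations between the MT-algebra language and the point-set language for $\P(X)$. The only mild care needed is in part (1), to remember that applying the hypothesis to singletons is what bridges the universal quantifier over nonzero elements and the universal quantifier over points, and in part (2), to use atoms only on one side (the reverse direction for (2) above does not need singletons at all, just the family of compact neighborhoods $K_x$).
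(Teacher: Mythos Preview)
Your proposal is correct and is exactly the routine translation the paper has in mind; the paper itself does not give a proof, merely noting that the lemma ``is straightforward to verify.'' Your use of atoms (singletons) to pass between nonzero elements and points, and your unwinding of $V \lhd U$ in $\P(X)$, are precisely the expected bookkeeping.
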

The next result reformulates N-local compactness in terms closer to local compactness. While the latter requires each open element to be approximated from below via $\lhd$, N-local compactness only requires this for the top element:

\begin{proposition} \label{prop: NLC}
    $M$ is N-locally compact iff $1 = \bigvee \{u \in \O(M) \mid u \lhd 1\}$.
\end{proposition}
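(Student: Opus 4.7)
The plan is to handle each direction separately, exploiting the fact that $\lhd$-below-$1$ is witnessed by a compact element and that open elements are fixed by $\square$.

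For the forward direction, suppose $M$ is N-locally compact and set $s = \bigvee \{u \in \O(M) \mid u \lhd 1\}$. I would argue by contradiction: if $s < 1$, then $a := \neg s$ is nonzero, so by N-local compactness there is a compact $k$ with $a \wedge \square k$ nonzero. But $\square k \in \O(M)$ and $\square k \leq k$ with $k$ compact, so $\square k \lhd 1$. Hence $\square k \leq s$, which contradicts $a \wedge \square k \neq 0$ since $a$ and $s$ are disjoint.

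For the reverse direction, assume $1 = \bigvee \{u \in \O(M) \mid u \lhd 1\}$ and let $a \in M$ be nonzero. Using the complete distributivity available in the boolean algebra $M$, I get $a = a \wedge 1 = \bigvee \{a \wedge u \mid u \in \O(M),\ u \lhd 1\}$. Since $a$ is nonzero, some $u \in \O(M)$ with $u \lhd 1$ must satisfy $a \wedge u \neq 0$. Picking a compact $k$ with $u \leq k$ and using that $u = \square u \leq \square k$ (because $u$ is open), I conclude $a \wedge \square k \geq a \wedge u \neq 0$, witnessing N-local compactness.

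There is no real obstacle here; the argument is a routine unfolding of the two definitions, and the only slightly subtle point worth flagging is the use of $u = \square u$ for open $u$ in the reverse direction (which turns a compact upper bound into an open element of the form $\square k$).
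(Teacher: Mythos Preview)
Your proof is correct and follows essentially the same route as the paper's: both directions match the paper's argument line for line, including the contradiction in the forward direction and the distributivity step in the reverse direction. One terminological nit: what you invoke in the reverse direction is the infinite distributive law $a \wedge \bigvee S = \bigvee\{a \wedge s \mid s \in S\}$, which holds in any complete boolean algebra; calling this ``complete distributivity'' is misleading, since complete distributivity is a strictly stronger property (equivalent, for complete boolean algebras, to atomicity) and is not assumed here.
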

\begin{proof}
    ($\Rightarrow$) Let $v = \bigvee \{u \in \O(M) \mid u \lhd 1\}$. If $v \neq 1$, then $\neg v$ is nonzero. Therefore, by N-local compactness, there is a compact $k$ such that $\square k \wedge \neg v$ is nonzero. But $\square k \leq k \leq 1$, so $\square k \lhd 1$. Thus, $\square k \leq v$ and $\square k \wedge \neg v$ is nonzero, a contradiction.

    ($\Leftarrow$) Suppose $a$ is nonzero. Then $a \leq 1 = \bigvee \{u \in \O(M) \mid u \lhd 1\}$. Therefore, 
    \[
    a = a \wedge \bigvee \{u \in \O(M) \mid u \lhd 1\} = \bigvee \{ a \wedge u \mid u \in \O(M) \mbox{ and } u \lhd 1\}.
    \]
    Thus, there is $u \in \O(M)$ such that $u \lhd 1$ and $a \wedge u$ is nonzero. This means that there is a compact $k$ such that $a \wedge \square k \geq a \wedge u \neq 0$.
\end{proof}

The above proposition makes it clear that both compactness and local compactness of an MT-algebra imply its N-local compactness: 

\begin{corollary}
    Let $M$ be an MT-algebra.
    \begin{enumerate}[ref=\thelemma(\arabic*)]
        \item If $M$ is locally compact, then $M$ is N-locally compact. \label[corollary]{LC is NLC}
        \item If $M$ is compact, then $M$ is N-locally compact. \label[corollary]{compact is NLC}
    \end{enumerate}
\end{corollary}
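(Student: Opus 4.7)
The plan is to deduce both parts directly from \cref{prop: NLC}, which gives the handy reformulation that $M$ is N-locally compact iff $1 = \bigvee\{u \in \O(M) \mid u \lhd 1\}$. Once we have this equivalence, neither part requires any new construction, just a recognition that the defining condition specializes correctly.

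For part (1), suppose $M$ is locally compact. Applying the defining equality to the open element $u = 1$ gives $1 = \bigvee\{v \in \O(M) \mid v \lhd 1\}$, and then \cref{prop: NLC} immediately yields N-local compactness. So the only step is to instantiate the universal quantifier at $u = 1$ (noting $1 = \square 1 \in \O(M)$).

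For part (2), suppose $M$ is compact, so $1$ is compact. Taking $k = 1$ in the definition of $\lhd$, we have $1 \leq 1 \leq 1$, so $1 \lhd 1$. Since $1 \in \O(M)$, the element $1$ itself lies in the set $\{u \in \O(M) \mid u \lhd 1\}$, hence the join of this set is $1$. Again \cref{prop: NLC} concludes the argument.

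There is no real obstacle here; the work was already done in establishing \cref{prop: NLC}. The only minor point to be careful about is confirming that $1 \in \O(M)$, which is immediate from the MT-axiom $\square 1 = 1$.
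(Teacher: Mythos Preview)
Your proof is correct and follows exactly the approach the paper intends: the paper gives no explicit proof, merely stating that the corollary is clear from \cref{prop: NLC}, and your argument spells out precisely those two one-line instantiations.
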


However, the reverse implications of the above corollary are not true in general, already for spatial MT-algebras: 

\begin{example} 
    Let $X_1$ be the one-point compactification of $\mathbb Q$
    (so $X_1$ is not Hausdorff since $\mathbb Q$ is not locally compact)
    and let $X_2$ be any infinite discrete space.
    Then $X_1$ is compact but not locally compact, while $X_2$ is locally compact but not compact. We let $X$ be the disjoint union of $X_1$ and $X_2$. 
    Then $X$ is not compact since $X_2$ is not compact, and $X$ is not locally compact since $X_1$ is not locally compact. However, $X$ satisfies \ref{NLC} since both $X_1$ and $X_2$ do (because $X_1$ is compact and $X_2$ is discrete). Thus, by \cref{X LC iff PX LC}, $\P(X)$ is an N-locally compact MT-algebra that is neither compact nor locally compact.
\end{example}
For $T_2$-algebras, 
N-local compactness does imply local compactness, 
generalizing what happens in 
topological spaces:

\begin{theorem} \label{lc implies nlc}
    If $M$ is an N-locally compact $T_2$-algebra, then $M$ is locally compact.
\end{theorem}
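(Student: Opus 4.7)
My plan is to argue by contradiction, translating the classical topological proof that, in a Hausdorff space, every point with a compact neighborhood admits a fundamental system of compact neighborhoods. Fix $u \in \O(M)$ and set $u' := \bigvee\{v \in \O(M) \mid v \lhd u\}$, so $u' \le u$. Assume for contradiction that $u' < u$. Since $M$ is a $T_2$-algebra, join-density of the $T_2$-elements furnishes a nonzero $T_2$-element $a \le u \wedge \neg u'$, so that $a \le u$ and $a \wedge u' = 0$; N-local compactness then yields a compact $k \in M$ with $b := a \wedge \square k \neq 0$.

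The core step is to produce an open $w \geq a$ with $\diamond w \wedge k \le u$; this is the pointfree analogue of shrinking the neighborhood $u$ of the ``point'' $a$ inside the compact $k$, and is where Hausdorffness enters. From the $T_2$-characterization $a = \bigwedge\{\diamond w \mid a \le w \in \O(M)\}$ and $a \le u$ one obtains
\[
1 = u \vee \neg a = u \vee \bigvee\{\square \neg w \mid a \le w \in \O(M)\},
\]
exhibiting an open cover of $k$. Compactness of $k$ provides finitely many $w_1,\dots,w_n \geq a$ with $k \le u \vee \square \neg w_1 \vee \cdots \vee \square \neg w_n$; setting $w := w_1 \wedge \cdots \wedge w_n$ (open, with $a \le w$) and using $\square \neg w_i = \neg \diamond w_i \le \neg \diamond w$ collapses this to $\diamond w \wedge k \le u$.

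To conclude, set $v := w \wedge \square k \in \O(M)$. Then $v \wedge a = \square k \wedge a = b \neq 0$ because $a \le w$. On the other hand $v \le \diamond w \wedge k$, and $\diamond w \wedge k$ is compact: given $X \subseteq \O(M)$ with $\diamond w \wedge k \le \bigvee X$, one has $k \le \bigvee X \vee \square \neg w$, so compactness of $k$ yields a finite subcover which, after meeting with $\diamond w$, covers $\diamond w \wedge k$. Together with $\diamond w \wedge k \le u$, this gives $v \lhd u$, hence $v \le u'$ and so $v \wedge a \le u' \wedge a = 0$, contradicting $v \wedge a = b \neq 0$.

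The main obstacle is the shrinking step in the second paragraph, where the $T_2$-character of $a$ must be leveraged together with compactness of $k$ to extract a single open $w$. A minor technical wrinkle is that $k$ is not assumed closed, so the fact that closed subelements of a compact element are compact cannot be applied directly to conclude that $\diamond w \wedge k$ is compact; instead, a finite-subcover argument must be rerun, as indicated.
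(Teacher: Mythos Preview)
Your proof is correct and follows essentially the same strategy as the paper's: both use the $T_2$-identity $a=\bigwedge\{\diamond w\mid a\le w\in\O(M)\}$ together with compactness of a witness $k$ to extract finitely many $w_i$ with $k\wedge\bigwedge_i\diamond w_i\le u$, and then package $w=\bigwedge_i w_i$ into an open $v\lhd u$ meeting $a$. The only differences are cosmetic: you argue by contradiction and verify compactness of $\diamond w\wedge k$ by rerunning the finite-subcover argument (since $k$ need not be closed), whereas the paper proceeds directly and invokes the standing fact that a closed element below a compact element is compact; your version is marginally more self-contained in this respect.
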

\begin{proof}
    Let $u \in \O(M)$. By \cref{prop: NLC}, $u = \bigvee \{u \wedge v \in \O(M) \mid v \lhd 1\}$. Since $M$ is a $T_2$-algebra, it suffices to show that for each $v \lhd 1$ and $T_2$-element $a \leq u \wedge v$ there is some $v' \in \O(M)$ with $a \leq v' \lhd u$. Because $v \lhd 1$, there is a compact $k \in M$ such that $v \leq k$. By the definition of $T_2$-element,
    \[
        a = \bigwedge \{ \diamond w \mid a \leq w \in \O(M)\} \leq u \wedge v \leq k.
    \]
    Since we have 
    a meet of closed elements underneath an open element
     inside a compact element, there exist finitely many $w_1, \dots, w_n \in \O(M)$ such that $a \leq w_i$ for each $i$ and $\diamond w_1 \wedge \dots \wedge \diamond w_n \leq u$.
     Let $v' = w_1 \wedge \dots \wedge w_n$ and $k' = \diamond w_1 \wedge \dots \wedge \diamond w_n$. Then
    $
        a \leq v' \leq k' \leq u.
    $
    Moreover, $v' \in \O(M)$ and $k'$ is compact since $k'$ is a closed element underneath a compact element. Thus, $a \leq v' \lhd u$, as required.
\end{proof}

Assuming AC, the preceding result can also be derived from the spatial case 
since Nöbeling showed that AC implies that N-locally compact $T_1$-algebras are spatial. In particular, every N-locally compact $T_2$-algebra is spatial. Using \cref{X LC iff PX LC} 
(and 
that \ref{NLC} implies \ref{LC} for Hausdorff spaces) 
it then follows that N-locally compact $T_2$-algebras are locally compact. 

We now recall Nöbeling's spatiality results and derive
the well-known Isbell Spatiality Theorem as a consequence. In the rest of the section, 
we indicate by an asterisk the results that depend on AC. 

\begin{AC}
\begin{theorem}[Nöbeling's Spatiality Theorems]
\leavevmode
    \begin{enumerate}[ref=\thetheorem(\arabic*)]
        \item {\upshape (\cite[12.5]{Noeb54})} Compact $T_1$-algebras are spatial. \label[theorem]{Compact T1 is spatial}
        \item {\upshape (\cite[12.16]{Noeb54})} N-locally compact $T_1$-algebras are spatial. \label[theorem]{LC T1 is spatial}
    \end{enumerate}
\end{theorem}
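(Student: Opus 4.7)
The plan is to use the characterization (stated in the preliminaries) that an MT-algebra is spatial iff it is atomic. Hence both claims reduce to showing that every nonzero $a \in M$ contains an atom. Since every compact MT-algebra is N-locally compact by \cref{compact is NLC}, part~(1) is a special case of part~(2), so I would handle both by focusing on the N-locally compact case.

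Given nonzero $a \in M$, N-local compactness yields a compact $k \in M$ with $a \wedge \square k$ nonzero; then the $T_1$-separation (join-density of $\C(M)$) provides a nonzero closed $c \leq a \wedge \square k \leq \square k \leq k$. Because $c$ is a closed element below a compact element, $c$ is itself compact. This reduces the task to finding an atom of $M$ below a nonzero compact closed element $c$.

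Next, I would apply Zorn's lemma to the poset $P$ of nonzero closed elements of $M$ below $c$, ordered by reverse $\leq$. Meets of closed elements are closed (since $\C(M)$ is closed under arbitrary meets), so for a chain $\{c_i\} \subseteq P$ its meet lies in $\C(M)$; it is nonzero by a finite-intersection argument using compactness of $c$: if $\bigwedge c_i = 0$, then $1 = \bigvee \neg c_i$ gives an open cover of $c$, and a finite subcover reduces via the chain condition to a single $\neg c_{i_n} \geq c$, forcing $c_{i_n} = 0$, a contradiction. Zorn then produces a minimal nonzero closed element $c_0 \leq c$.

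The final step, and what I expect to be the crux, is to promote $c_0$ to an atom of $M$ rather than merely a minimal element of $\C(M)$. If $0 < b \leq c_0$, then applying $T_1$ again yields a nonzero closed $c_1 \leq b \leq c_0$; minimality forces $c_1 = c_0$, hence $b = c_0$. So $c_0$ is an atom of $M$ underneath $a$, as desired. The main obstacle is precisely this last promotion: without the $T_1$-separation guaranteeing join-density of closed elements one could have a nonzero non-closed $b$ strictly below a minimal closed element, and the argument would collapse. Everything else — the Zorn setup, the FIP argument from compactness, and meet-closure of $\C(M)$ — is essentially bookkeeping from the preliminaries.
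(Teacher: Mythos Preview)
Your proof is correct. The paper does not give its own proof of this theorem---it simply cites N\"obeling---but the argument you outline is precisely the one the paper builds in the surrounding lemmas: your Zorn step is \cref{min exists} (stated there globally for a compact algebra rather than relative to a compact closed element), and your final ``promotion'' of a minimal closed element to an atom is \cref{min is atom}, proved there under the weaker $T_0$ hypothesis. Your relativization to a compact closed element under a given $a$ is exactly the device the paper employs in \cref{thm: td spatial} for the $T_\D$ case, so your proposal and the paper's machinery are the same approach.
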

\end{AC}

\begin{AC}
\begin{corollary}[Isbell's Spatiality Theorem] \label{Isbell Spatiality}
    Compact subfit frames are spatial.
\end{corollary}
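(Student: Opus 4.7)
The plan is to reduce Isbell's Spatiality Theorem to \cref{Compact T1 is spatial} by passing through the Funayama envelope. Given a compact subfit frame $L$, the natural MT-algebra to which Nöbeling's theorem should apply is $\F L$, because $\O(\F L) \cong L$. The proof will consist of checking that $\F L$ satisfies the hypotheses of \cref{Compact T1 is spatial} and then transporting spatiality back along $\O(\F L) \cong L$.

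First I would observe that $\F L$ is compact. The paper records, just before the remark on Nöbeling's definition of compactness, the equivalence ``$M$ is compact iff $\O(M)$ is compact''. Applied to $M = \F L$ with $\O(\F L) \cong L$, compactness of $L$ yields compactness of $\F L$. Next, by \cref{thm: T1 subfit}, $\F L$ is a $T_1$-algebra precisely because $L$ is subfit. Hence \cref{Compact T1 is spatial} applies to $\F L$, giving that $\F L$ is spatial.

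It remains to deduce spatiality of $L$ from that of $\F L$. This is immediate from the fact, recorded in the preliminaries, that whenever an MT-algebra $M$ is spatial, the frame $\O(M)$ is spatial as well. Taking $M = \F L$ yields spatiality of $\O(\F L) \cong L$, finishing the proof.

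There is no genuine obstacle here; the argument is simply the alignment of three already-established facts (the compactness-of-opens equivalence, the $T_1$/subfit correspondence for Funayama envelopes, and Nöbeling's compact-$T_1$ spatiality theorem), together with the well-known transfer of spatiality from an MT-algebra to its frame of opens. The whole derivation depends on AC only through the single invocation of \cref{Compact T1 is spatial}, which justifies the asterisk on the corollary.
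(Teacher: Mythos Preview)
Your proposal is correct and essentially identical to the paper's own proof: pass to the Funayama envelope $\F L$, use \cref{thm: T1 subfit} and the compactness-of-opens equivalence to verify the hypotheses of \cref{Compact T1 is spatial}, and then transfer spatiality back along $\O(\F L)\cong L$. No differences worth noting.
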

\begin{proof}
    Let $L$ be a compact subfit frame. We let $M := \F L$ be the Funayama envelope of $L$. 
    Then $M$ is a $T_1$-algebra by \cref{thm: T1 subfit}, and $M$ is compact since $\O(M) \cong L$ and $L$ is compact. Therefore, $M$ is spatial by \cref{Compact T1 is spatial}. Thus, $\O(M)$ is spatial, and hence so is $L$. 
\end{proof}
\end{AC}

The following example shows that neither \cref{Compact T1 is spatial} nor \cref{LC T1 is spatial} can be generalized to $T_\D$-algebras.
\begin{example}
    Let $L$ be a nonspatial compact frame, e.g., a nontrivial complete atomless boolean algebra adjoined with a new top.
    Then $\F L$ is a compact $T_\D$-algebra (see \cref{thm: FL Td}), but $\F L$ can't be spatial since $L$ isn't
    (in fact, $\F L$ contains a single atom). Moreover, $\F L$ is N-locally compact by \cref{compact is NLC}.
\end{example}

The previous example shows that compact and hence N-locally compact $T_\D$-algebras need not contain many atoms. On the other hand, as we will see, AC implies 
that each compact $T_\D$-algebra must contain at least one atom. In fact, for this to be true it is enough to assume that the MT-algebra is $T_0$. 
However, to locate an atom underneath each nonzero element, we will require both local compactness and $T_\D$-separation. To see this, we need the following two lemmas.

\begin{AC}
\begin{lemma}
    If $M$ is a nontrivial compact MT-algebra, then $\C(M)\setminus\{0\}$ contains a minimal element. \label{min exists}
\end{lemma}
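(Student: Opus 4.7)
The plan is to apply Zorn's lemma to the poset $\C(M)\setminus\{0\}$ ordered by the restriction of $\leq$, and extract a minimal element. I would first note that since $M$ is nontrivial, $1 \neq 0$, and $1 \in \C(M)$ because $\diamond 1 = \neg \square \neg 1 = \neg \square 0 = \neg 0 = 1$ (using $\square 0 \leq 0$). So the poset is nonempty, and a minimal element of it will be the element the lemma asserts to exist.

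The key step will be to verify the chain condition in its dual form: every descending chain $\{c_i\}_{i \in I} \subseteq \C(M)\setminus\{0\}$ admits a lower bound inside $\C(M)\setminus\{0\}$. The obvious candidate is $c := \bigwedge_{i \in I} c_i$. Membership $c \in \C(M)$ is free: the complements $\neg c_i$ all lie in $\O(M)$, opens are closed under arbitrary joins, so $\neg c = \bigvee_i \neg c_i \in \O(M)$ and hence $c \in \C(M)$. The real content is to show $c \neq 0$, and this is where I would use that $M$ is compact.

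Suppose toward a contradiction that $c = 0$. Then $\bigvee_i \neg c_i = 1$, and since every $\neg c_i \in \O(M)$, compactness of $M$ produces a finite subfamily $\neg c_{i_1}, \dots, \neg c_{i_n}$ whose join is $1$. Because $\{c_i\}$ is totally ordered, the finite set $\{c_{i_1}, \dots, c_{i_n}\}$ contains a minimum element $c_{i_m}$; then $\neg c_{i_k} \leq \neg c_{i_m}$ for every $k$, forcing $\neg c_{i_m} = 1$ and hence $c_{i_m} = 0$, contradicting $c_{i_m} \in \C(M)\setminus\{0\}$. With the chain condition established, Zorn's lemma (in its dual form) delivers a minimal element of $\C(M)\setminus\{0\}$.

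The only substantive obstacle is the compactness step ruling out $c = 0$; the remainder is routine poset bookkeeping. Note that since the argument invokes Zorn's lemma, its placement inside the \texttt{AC} environment is appropriate and matches the paper's convention of marking AC-dependent results.
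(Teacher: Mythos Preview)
Your proof is correct and follows essentially the same approach as the paper: apply Zorn's Lemma to $\C(M)\setminus\{0\}$, using compactness to show that the meet of any chain of nonzero closed elements remains nonzero. The paper's version is terser (it simply asserts $\bigwedge C \ne 0$ ``since $M$ is compact''), while you spell out the open-cover argument and the closure of $\C(M)$ under arbitrary meets, but the underlying idea is identical.
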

\begin{proof}
    Let $Z = \C(M) \setminus \{0\}$ and view $Z$ as a poset with the order inherited from $M$. Then $Z$ is nonempty since $1 \in Z$, and if $C \subseteq Z$ is a chain, then $\bigwedge C \neq 0$ since $M$ is compact. Therefore, $\bigwedge C \in Z$ for each chain $C \subseteq Z$. Thus,  Zorn's Lemma applies to $Z$, by which the desired element exists.
\end{proof}
\end{AC}

\begin{lemma}
    If $M$ is a $T_0$-algebra and $c \in \C(M)\setminus\{0\}$ is a minimal element, then $c$ is an atom of $M$. \label{min is atom}
\end{lemma}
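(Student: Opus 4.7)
The plan is to use the minimality of $c$ in $\C(M) \setminus \{0\}$ to obtain a rigid dichotomy---each open element either contains $c$ or is disjoint from $c$---propagate this dichotomy to saturated elements, and then invoke the $T_0$-join-density of locally closed elements to pinpoint every nonzero element below $c$ as $c$ itself.

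First, I would prove an \emph{open dichotomy}: for each $u \in \O(M)$, either $c \leq u$ or $u \wedge c = 0$. Indeed, $c \wedge \neg u$ is closed and lies below $c$, so by minimality it is either $0$ (whence $c \leq u$) or $c$ (whence $c \leq \neg u$, i.e.\ $u \wedge c = 0$). Next, I would upgrade this to a \emph{saturated dichotomy}: for each $s \in \S(M)$, either $c \leq s$ (in which case $s \wedge c = c$) or $s \wedge c = 0$. Writing $s = \bigwedge U$ with $U \subseteq \O(M)$ and using that binary meet distributes over arbitrary meets gives $s \wedge c = \bigwedge_{u \in U}(u \wedge c)$; by the open dichotomy each factor lies in $\{0, c\}$, so either some factor vanishes (making the meet $0$) or every $u \in U$ satisfies $c \leq u$, in which case $c \leq s$ and $s \wedge c = c$.

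For the conclusion, let $0 \neq a \leq c$. By $T_0$-join-density there is a nonzero $T_0$-element $t = s \wedge c' \leq a$ with $s \in \S(M)$ and $c' \in \C(M)$. Since $c \wedge c'$ is closed, nonzero (it contains $t$), and sits below $c$, minimality forces $c \leq c'$, and combining with $t \leq c$ yields $t = s \wedge c$. Nonzeroness of $t$ then triggers the saturated dichotomy, forcing $s \wedge c = c$; hence $c = t \leq a \leq c$, so $a = c$. The main technical point---and where real care is needed---is the propagation of the open dichotomy through arbitrary meets to obtain the saturated dichotomy, since this is where minimality of $c$ gets leveraged against the full structure of $\S(M)$. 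Once this is in hand, $T_0$-density supplies the right saturated-meet-closed piece under $a$ and the argument collapses to $a = c$ with no further work.
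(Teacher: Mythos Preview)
Your proof is correct and follows essentially the same approach as the paper's: both use minimality of $c$ to force the open dichotomy (the paper does this pointwise for each $s \in S$, you state it once), extend it through meets to saturated elements, and then reduce via $T_0$-join-density. Your presentation is slightly more modular---isolating the open and saturated dichotomies as separate steps---whereas the paper folds these directly into the analysis of a single $T_0$-element $a = \left(\bigwedge S\right) \wedge \neg v$; but the underlying argument is the same.
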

\begin{proof}
    Suppose $0 \neq a $  is a $T_0$-element such that $a \leq c$. Then $a = \left(\bigwedge S\right) \wedge \neg v$ for some $S \subseteq \O(M)$ and $v \in \O(M)$. Therefore,  $\left(\bigwedge S\right) \wedge \neg v \le c$. Since $c$ is minimal in $\C(M)\setminus\{0\}$, $\neg v \wedge c \in\C(M)$, and $\neg v\wedge c$ is nonzero, we must have $\neg v \wedge c =c $. Thus, we may assume that $\neg v = c$.
    If $\bigwedge S < c$ then $c \wedge \neg\left(\bigwedge S\right)$ is nonzero, so $c \wedge \bigvee \{ \neg s \mid s \in S \}$ is nonzero. Therefore, $c \wedge \neg s$ is nonzero for some $s \in S$. But if $c \wedge \neg s$ is nonzero, then $c \wedge \neg s = c$ since $c \wedge \neg s \in \C(M)$ and $c$ is minimal. Thus, $c \leq \neg s$, so $c$ and $s$ disjoint, and hence $a = 0$, a contradiction. Consequently, $\bigwedge S =a$. Hence, $a=c$, yielding that $c$ is an atom because $M$ is a $T_0$-algebra.
\end{proof}

Putting \Cref{min exists,min is atom} together yields:

\begin{AC}
\begin{proposition} \label{compact has atoms}
    Every nontrivial compact $T_0$-algebra contains a closed atom.
\end{proposition}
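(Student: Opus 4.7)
The plan is simply to combine the two preceding lemmas, so the proof proposal is very short. Given a nontrivial compact $T_0$-algebra $M$, I would first invoke \cref{min exists} (which needs AC, via Zorn's Lemma applied to the poset $\C(M)\setminus\{0\}$ ordered by inclusion, using compactness to guarantee that meets of chains remain nonzero) to obtain a minimal element $c$ of $\C(M)\setminus\{0\}$. Then I would directly apply \cref{min is atom} to the $T_0$-algebra $M$ and the minimal closed element $c$ to conclude that $c$ is an atom of $M$. Since $c \in \C(M)$ by construction, $c$ is a closed atom, which is what we wanted.

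There is essentially no obstacle here beyond the bookkeeping of noting that the hypothesis ``nontrivial'' ensures $\C(M)\setminus\{0\}$ is nonempty (indeed $1 \in \C(M)\setminus\{0\}$ since $M$ is nontrivial), so \cref{min exists} applies, and that the $T_0$-hypothesis is exactly what \cref{min is atom} needs. The proposition should be tagged as depending on AC (that is, wrapped in the \texttt{AC} environment) because \cref{min exists} already depends on AC via Zorn's Lemma, even though \cref{min is atom} is choice-free. This matches the discussion in the preceding paragraph, which announces that AC will be used to show every compact $T_0$-algebra contains an atom.
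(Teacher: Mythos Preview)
Your proposal is correct and matches the paper's approach exactly: the paper simply states that putting \cref{min exists} and \cref{min is atom} together yields the proposition, without any additional argument. Your remarks about nontriviality ensuring $1\in\C(M)\setminus\{0\}$ and about the AC-dependence via \cref{min exists} are accurate bookkeeping.
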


Let $M$ be an MT-algebra and $a\in M$. Then the {\em relativization} 
\[
M_a := \{b \in M \mid b \leq a\}
\]
is also an MT-algebra, where $\O(M_a) = \{u \wedge a \mid u \in \O(M)\}$ and $\C(M_a) = \{c \wedge a \mid c \in \C(M)\}$ (see, e.g., \cite[p.~96]{RS63}). 
Moreover, if $M$ is a $T_\D$-algebra, then so is $M_a$. To see this, every element in $M_a$ is of the form $b \wedge a$ for some $b \in M$. Therefore, if $M$ is a $T_\D$-algebra, each such $b$ is a join of $T_\D$-elements in $M$, yielding that $b \wedge a$ is a join of $T_\D$-elements in $M_a$. Thus, $M_a$ is a $T_\D$-algebra.

\begin{theorem} \label{thm: td spatial}
    If $M$ is a locally compact $T_\D$-algebra, then $M$ is spatial.
\end{theorem}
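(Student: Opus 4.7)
The plan is to establish that $M$ is atomic, which is equivalent to spatiality for an MT-algebra by the characterization recalled in the preliminaries. Concretely, I show that every nonzero $a \in M$ lies above an atom. Since $M$ is a $T_\D$-algebra, the $T_\D$-elements are join-dense, so there is a nonzero $T_\D$-element $b \leq a$ of the form $b = u \wedge c$ with $u \in \O(M)$ and $c \in \C(M)$.

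Next, local compactness gives $u = \bigvee \{v \in \O(M) \mid v \lhd u\}$, and by distributivity in the underlying complete Boolean algebra,
\[
b = u \wedge c = \bigvee \{v \wedge c \mid v \in \O(M),\, v \lhd u\}.
\]
Since $b \neq 0$, some term $v \wedge c$ is nonzero, so I may fix $v \in \O(M)$ and a compact $k \in M$ with $v \leq k \leq u$ and $v \wedge c \neq 0$. Set $e := k \wedge c$; then $0 \neq v \wedge c \leq e \leq u \wedge c \leq a$. The key observation is that $e$ is itself compact in $M$: given any cover $e \leq \bigvee U$ with $U \subseteq \O(M)$, the identity $k = (k \wedge c) \vee (k \wedge \neg c)$ together with $\neg c \in \O(M)$ yields an open cover $k \leq \bigvee U \vee \neg c$, so compactness of $k$ produces a finite $V \subseteq U$ with $k \leq \bigvee V \vee \neg c$, whence $e = k \wedge c \leq \bigvee V$.

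Having isolated a nonzero compact element $e \leq a$, I pass to the relativization $M_e$. By the paragraph preceding the theorem, $M_e$ is a $T_\D$-algebra, and because its top $e$ is compact in $M$ (and joins in $M_e$ coincide with joins in $M$), $M_e$ is a nontrivial compact $T_\D$-algebra. Since $T_\D$ implies $T_0$, \cref{compact has atoms} produces a closed atom $c'$ of $M_e$. Any element of $M$ strictly between $0$ and $c'$ would also lie in $M_e$ and contradict the atomicity of $c'$ there, so $c'$ is an atom of $M$ as well, with $c' \leq e \leq a$.

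The only non-routine step is verifying compactness of $e = k \wedge c$, by the Boolean-algebraic argument above. Everything else is a clean assembly: $T_\D$-separation selects the $T_\D$-element $u \wedge c$, local compactness replaces $u$ by an open $v$ trapped inside a compact $k$, and \cref{compact has atoms} applied in the relativization $M_{k \wedge c}$ supplies the desired atom.
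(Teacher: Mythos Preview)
Your proof is correct and follows essentially the same route as the paper's: reduce to a nonzero $T_\D$-element $u\wedge c$, use local compactness to find a compact $k\le u$ with $k\wedge c\ne 0$, observe that $k\wedge c$ is compact, and apply \cref{compact has atoms} to the relativization $M_{k\wedge c}$. The only cosmetic difference is that the paper cites the fact that a compact element meets with a closed element to give a compact element, whereas you spell out that short cover argument explicitly.
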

\begin{proof}
    It suffices to show that every nonzero $T_\D$-element has an atom underneath. Suppose $a \in M$ is a nonzero $T_\D$-element, so $a = u \wedge \neg v$ for some $u,v \in \O(M)$. Since $M$ is locally compact, $u = \bigvee \{w \in \O(M) \mid w \lhd u\}$. Because $a$ is nonzero, 
    there is $w \lhd u$ such that $w \wedge \neg v$ is nonzero. 
    Therefore, there is a
    compact $k \in M$ such that $w \leq k \leq u$ and $b := k \wedge \neg v$ is nonzero. Since $k$ is compact and $v$ is open, $b$ is compact. Thus, the relativization $M_b$ is a nontrivial compact $T_\D$-algebra.
    Applying
    \Cref{compact has atoms} to $M_b$ yields a closed atom $x \in M_b$. But 
    $x$ is also an atom in $M$ and $x \le a$, completing the proof. 
\end{proof}
\end{AC}

\begin{remark}
We recall 
that an MT-algebra $M$ is \emph{sober} if it is $T_0$ and for every join-irreducible element\footnote{We recall (see, e.g., \cite[p.~53]{DP02}) that a nonzero element $c$ of a lattice $L$ is {\em join-irreducible} if $c = a \vee b$ implies $c=a$ or $c=b$ for all $a,b \in L$.} $c$ in $\C(M)$ there is an atom $x\in M$ such that $c= \diamond x$.
By \cite[Thm.~6.7]{BR23}, a sober $T_\D$-algebra $M$  is spatial iff 
its frame of opens $\O(M)$ is spatial.
    Since $M$ locally compact implies that $\O(M)$ is locally compact (see \cite[Thm.~4.7]{BR25}) and hence spatial (see, e.g., \cite[Prop.~VII.6.3.3]{PP12}), we obtain that 
    every locally compact sober $T_\D$-algebra is spatial. \Cref{thm: td spatial} 
    shows that the sobriety assumption can be dropped.
\end{remark}

The spatiality of locally compact $T_\D$-algebras relies on the fact that every nontrivial compact $T_0$-algebra contains a closed atom. We conclude this section by showing that this condition is equivalent to AC.

\begin{theorem} \label{thm:AC}
    The following conditions are equivalent to \textup{AC}. 
    \begin{enumerate}[ref=\thetheorem(\arabic*)]
        \item Every nontrivial compact frame has a maximal ideal.
        \item Every nontrivial compact MT-algebra contains a nonzero minimal closed element.
        \item Every nontrivial compact $T_0$-algebra  contains a closed atom. \label[theorem]{thm:AC-3}
        \item Every nontrivial compact $T_\D$-algebra contains a closed atom.
    \end{enumerate}
\end{theorem}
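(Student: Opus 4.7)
The plan is to establish the cyclic chain $\mathrm{AC} \Rightarrow (2) \Rightarrow (3) \Rightarrow (4) \Rightarrow (1) \Rightarrow \mathrm{AC}$, yielding the equivalence of all four conditions with AC.

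The first three implications are already at hand. The implication $\mathrm{AC} \Rightarrow (2)$ is exactly \cref{min exists}, where Zorn's Lemma is applied to $\C(M)\setminus\{0\}$ with compactness guaranteeing that chains have nonzero meets. For $(2) \Rightarrow (3)$, a nontrivial compact $T_0$-algebra is in particular a nontrivial compact MT-algebra, so by (2) it contains a nonzero minimal closed element, which is an atom by \cref{min is atom}. Since every $T_\D$-algebra is a $T_0$-algebra, $(3) \Rightarrow (4)$ is immediate.

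For $(4) \Rightarrow (1)$, I would pass from a nontrivial compact frame $L$ to its Funayama envelope $\F L$. Since $\O(\F L) \cong L$, the MT-algebra $\F L$ is nontrivial and compact, and by \cref{thm: FL Td} it is a $T_\D$-algebra. Applying (4) produces a closed atom $x \in \F L$, and then $\neg x \in \O(\F L) = L$. To see that $\neg x$ is a co-atom of $L$, take any $u \in L$ with $\neg x \leq u < 1$; then $0 \neq \neg u \leq x$ in $\F L$, and the atomicity of $x$ forces $\neg u = x$, so $u = \neg x$. Hence $\downset\neg x$ is a maximal proper ideal of $L$.

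The main obstacle is $(1) \Rightarrow \mathrm{AC}$, where AC must actually be extracted from the abstract hypothesis. The strategy I would pursue is to associate, to any family $\{X_i\}_{i \in I}$ of nonempty sets, a nontrivial compact frame $L$ such that each maximal ideal of $L$ canonically determines a choice function for the family; (1) then supplies such an ideal. A natural candidate is the localic coproduct $L = \coprod_{i \in I} L_i$, where each $L_i$ is the frame of opens of $X_i$ under the cofinite topology: each $L_i$ is compact and $T_1$ with co-atoms in canonical bijection with points of $X_i$, while the localic Tychonoff theorem (provable in $\mathrm{ZF}$) makes $L$ compact and nontrivial. The delicate part is then to verify that a co-atom of $L$, restricted along the coproduct injections, yields a coherent co-atom in each factor $L_i$ and thereby the required choice function; this transfer between co-atoms of the coproduct and tuples of co-atoms in the factors is the technical heart of the argument.
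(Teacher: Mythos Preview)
Your chain $\mathrm{AC}\Rightarrow(2)\Rightarrow(3)\Rightarrow(4)\Rightarrow(1)$ is correct and essentially matches the paper: your $(2)\Rightarrow(3)$, $(3)\Rightarrow(4)$, and $(4)\Rightarrow(1)$ are identical to the paper's arguments, and your $\mathrm{AC}\Rightarrow(2)$ via \cref{min exists} is equivalent to what the paper does (the paper instead proves $(1)\Rightarrow(2)$ directly, turning a maximal ideal $I$ of $\O(M)$ into the minimal nonzero closed element $\neg\bigvee I$, but this is a minor reorganization).

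The real divergence is the step $(1)\Rightarrow\mathrm{AC}$. The paper does not prove this at all: it simply cites \cite{Her03} for the equivalence of (1) with AC and is done. You instead attempt a direct argument via the localic Tychonoff theorem applied to the frames $L_i=\Omega(X_i)$ for the cofinite topology, and you correctly identify the crux: one must show that a co-atom of the frame coproduct $\coprod_i L_i$ restricts along each coproduct injection $\iota_i$ to a co-atom of $L_i$. This step is genuinely nontrivial. A co-atom $m$ of a frame is meet-prime but need not determine a completely prime filter, so you cannot simply appeal to the universal property of the coproduct on points; and while the uniqueness part is easy (if $\iota_i(X_i\setminus\{x\})\le m$ and $\iota_i(X_i\setminus\{y\})\le m$ then $1=\iota_i(X_i)\le m$), the existence part---that the pullback ideal $\{a\in L_i:\iota_i(a)\le m\}$ is not just $\{\varnothing\}$---requires digging into the explicit description of the frame coproduct. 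Your sketch is headed in the right direction, but what remains is precisely the content of results such as those in \cite{Her03}, so in effect you are proposing to reconstruct that reference rather than bypass it.
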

\begin{proof}
    The equivalence of (1) and AC
    follows from \cite{Her03}. Thus, it suffices to show the implications (1)$\Rightarrow$(2)$\Rightarrow$(3)$\Rightarrow$(4)$\Rightarrow$(1).

    (1)$\Rightarrow$(2) Suppose $M$ is a compact MT-algebra. Then $\O(M)$ is a compact frame. By (1), $\O(M)$ has a maximal ideal $I$. Since $\O(M)$ is compact, $\bigvee I \ne 1$, so $I=\downset \bigvee I \cap \O(M)$ and $\bigvee I$ is a maximal element of $\O(M)\setminus\{1\}$ because $I$ is a maximal ideal. Thus, $c:=\neg\left(\bigvee I\right)$ is a minimal element of $\C(M)\setminus\{0\}$.   

    (2)$\Rightarrow$(3) This follows from \cref{min is atom}.

    (3)$\Rightarrow$(4) Every $T_\D$-algebra is a $T_0$-algebra.

    (4)$\Rightarrow$(1) Suppose $L$ is a nontrivial compact frame. Then the Funayama envelope $\F L$ is nontrivial. Also, $\F L$ is a $T_\D$-algebra by \cref{thm: FL Td}, and $\F L$ is compact because $\O(\F L) \cong L$ and $L$ is compact. 
    Therefore, by (4), $\F L$ has a closed atom $x$. Thus, $\neg x$ is an open coatom, and hence $I={\downarrow}(\neg x)\cap\O(\F L)$ 
    is a maximal ideal of $\O(\F L)$. 
\end{proof}

\begin{remark}
\leavevmode
\begin{enumerate}
    \item As far as we know, it remains open 
    whether any of the above spatiality results (Isbell's Spatiality Theorem, Nöbeling's Spatiality Theorems, or \cref{thm: td spatial}) is equivalent to AC. 
    \item A related question is whether the existence of maximal ideals in compact spatial frames suffices to imply AC. A positive answer would also imply that the spatial analogue of \cref{thm:AC} (in which we add the spatiality assumption to each item) 
    is equivalent to AC.
    \item
    That the existence of closed atoms in nontrivial compact $T_\D$-algebras is equivalent to AC again raises the question, posed in \cite[p.~120]{Joh82}, whether the mere existence of points in every nontrivial compact frame already implies AC. 
\end{enumerate}
\end{remark}

\section{Nonspatial locally compact sober MT-algebras}

In this final section, we show that locally compact sober MT-algebras need not be spatial, thus 
confirming the expectation of \cite[Rem.~4.14]{BR25} that the spatiality of locally compact frames does not generalize to the setting of locally compact sober MT-algebras. It also yields a negative solution to the first two open problems from  \cite[Sec.~9]{BR23}. Our construction uses the formalism of Raney extensions
\cite{Sua24,Sua25}. We work with a slight strengthening of the original definition, under which all results from \cite{Sua24,Sua25} still apply.
Let $C$ be a complete lattice. We recall that $C$ is a \emph{coframe} if finite joins distribute over arbitrary meets, and that a join $\bigvee S$ in $C$ is \emph{exact} if 
\[
    a \wedge \bigvee S = \bigvee\{ a \wedge s \mid s \in S \}
\]
for all $a \in C$. We note that exact joins are also known as {\em distributive joins} (see \cite[2.8]{BPWW16} for the history of the concept).

\begin{definition}
    A \emph{Raney extension} is a pair $R=(C,L)$ where $C$ is a coframe and $L\se C$ satisfies the following conditions:
    \begin{enumerate}
        \item $L$ is closed under arbitrary joins and finite meets;
        \item all joins in $L$ are exact in $C$;
        \item $L$ is meet-dense in $C$.
    \end{enumerate}
\end{definition}

Standard examples of Raney extensions come from topological spaces. Indeed, for each topological space $X$, the pair $\R(X) = (\S(X),\Omega(X))$ is a Raney extension, where $\S(X)$ is the coframe of saturated sets (intersections of open sets) of $X$. 
Such Raney extensions are called \emph{spatial} and are characterized as those Raney extensions $R=(C,L)$ in which the coframe $C$ is join-generated by its completely join-prime elements.\footnote{We recall (see, e.g., \cite[p.~242]{DP02}) that a nonzero element $p$ of a complete lattice $L$ is {\em completely join-prime} if for all $S \subseteq L$, from $p\le\bigvee S$ it follows that $p\le s$ for some $s\in S$.} 

There is a close connection between MT-algebras and Raney extensions. 
 For an MT-algebra $M$, we recall that $\S(M)$ is the set of saturated elements and $\O(M)$ the set of open elements, both ordered by the restriction of the order on $M$. 
 \begin{proposition}
     For an MT-algebra $M$, the pair $\R(M) = (\S(M),\O(M))$ is a Raney extension.
 \end{proposition}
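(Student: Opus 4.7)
The plan is to verify the three conditions in the definition of a Raney extension for $(\S(M),\O(M))$: that $\S(M)$ is a coframe, that $\O(M)$ is closed under arbitrary joins and finite meets with those joins exact in $\S(M)$, and that $\O(M)$ is meet-dense in $\S(M)$. The unifying observation I would exploit throughout is that $M$ is a complete boolean algebra, hence simultaneously a frame and a coframe, so both infinite distributive laws are available inside $M$.

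First I would establish the coframe structure on $\S(M)$ by showing that its finite joins and arbitrary meets are both inherited from $M$. Arbitrary meets of saturated elements are meets of meets of opens, hence saturated. For finite joins, if $s_1 = \bigwedge_i u_i$ and $s_2 = \bigwedge_j v_j$ with $u_i, v_j \in \O(M)$, the coframe law in $M$ gives $s_1 \vee s_2 = \bigwedge_{i,j}(u_i \vee v_j)$, again a meet of opens. Since both operations are inherited from $M$, the coframe identity $a \vee \bigwedge_k c_k = \bigwedge_k(a \vee c_k)$ transfers from $M$ to $\S(M)$ verbatim. Every open element is trivially a meet of opens, so $\O(M) \subseteq \S(M)$, and meet-density of $\O(M)$ in $\S(M)$ holds by the very definition of saturated element. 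Since $\O(M)$ is a frame whose arbitrary joins and finite meets are computed in $M$, and these coincide with the operations on $\S(M)$ described above, $\O(M)$ is closed under them as a sublattice of $\S(M)$. Exactness then follows from the frame law in $M$: for $a \in \S(M)$ and $S \subseteq \O(M)$, one has $a \wedge \bigvee S = \bigvee_{s \in S}(a \wedge s)$, where each $a \wedge s$ is saturated (writing $a = \bigwedge_i u_i$ yields $a \wedge s = \bigwedge_i(u_i \wedge s)$), and the right-hand join coincides with the join computed in $\S(M)$ because, as an equality in $M$, it equals the saturated element $a \wedge \bigvee S$.

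I do not anticipate a substantive obstacle; the only care required is bookkeeping which operations are inherited from $M$. Once one verifies that the finite joins and arbitrary meets of $\S(M)$, as well as the arbitrary joins and finite meets of $\O(M)$, are all computed in $M$, every Raney-extension axiom reduces to an instance of the frame or coframe law that already holds in the complete boolean algebra $M$.
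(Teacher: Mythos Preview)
Your proposal is correct and follows essentially the same route as the paper: both arguments reduce every Raney-extension axiom to an instance of the frame or coframe law in the ambient complete boolean algebra $M$, using that finite joins and arbitrary meets in $\S(M)$ (and arbitrary joins of opens) are computed in $M$. The only cosmetic difference is that the paper verifies exactness by expanding $\bigvee_{\S(M)}\{a\wedge s\}$ via the explicit formula $\bigwedge\{u\in\O(M)\mid a\wedge s\le u\ \forall s\}$, whereas you invoke directly the principle that if $\bigvee_M T$ happens to be saturated then it already is $\bigvee_{\S(M)} T$; these are the same observation in slightly different packaging.
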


\begin{proof} 
Both $\S(M)$ and $\O(M)$ are bounded sublattices of $M$. Moreover, $\S(M)$ is closed under arbitrary meets and $\O(M)$ is closed under arbitrary joins. Therefore,  
$\S(M)$ is a coframe and $\O(M)$ is a frame (in the order inherited from $M$). Moreover,  $\O(M)$ is closed under finite meets in $\S(M)$. To see that it is closed under arbitrary joins, let $S\subseteq\O(M)$. Then  
    \begin{equation*}\label{joins}
    \bigvee_{\O(M)} S = \bigwedge\{u \in \O(M) \mid s \leq u \text{ for all } s \in S \} = \bigvee_{\S(M)} S.
    \end{equation*}
    It is immediate from the definition of $\S(M)$ that $\O(M)$ is meet-dense in  $\S(M)$.  
    It is left to see that all joins in $\O(M)$ are exact in $\S(M)$. Let $a \in \S(M)$ and $S \subseteq \O(M)$. Then 
    \begin{align*}
        \bigvee_{\S(M)}\{ a \wedge s \mid s \in S\} &= \bigwedge \{u \in \O(M) \mid a \wedge s \leq u \ \ \forall s \in S \} \\
        &= \bigwedge \Big\{u \in \O(M) \mid \bigvee \{ a \wedge s \mid s \in S \} \leq u \Big\} \\
        &= \bigwedge \{u \in \O(M) \mid a \wedge \bigvee S \leq u\} \\
        &= 
        a \wedge \bigvee S
        =
        a \wedge \bigvee_{\S(M)} S,
    \end{align*} 
    where the second-to-last equality follows from $a \wedge \bigvee S \in \S(M)$. 
    Consequently, $\R(M)$ is a Raney extension.
\end{proof}

We next use the Funayama envelope to show that every Raney extension can be realized as $\R(M)$ for some $T_0$-algebra $M$, thus yielding a one-to-one correspondence between Raney extensions and $T_0$-algebras, which generalizes the one-to-one correspondence of \cref{thm: FL Td} between frames and $T_\D$-algebras.

For a Raney extension $R=(C,L)$, let $\F C$ be the Funayama envelope of $C$ (that is, $\F C$ is the MacNeille completion of the boolean envelope of $C$). Since all joins in $L$ are exact in $C$, the embedding
$L\to\F C$ has a right adjoint, which defines an interior operator $\square$ on $\F C$. 
Thus, $(\F C,\square)$ is an MT-algebra and $\O(\F C,\square) \cong L$. 

\begin{definition}
    For a Raney extension $R=(C,L)$, we call the MT-algebra $(\F C,\square)$ the {\em Funayama envelope} of $R$ and denote it by $\F R$.
\end{definition}

\begin{theorem}\label{T_0}
An MT-algebra $M$ is a $T_{0}$-algebra iff $M$ is isomorphic to $\F \R(M)$.
\end{theorem}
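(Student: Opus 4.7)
The plan is to realize the $T_0$-algebra $M$ as the MacNeille completion of the boolean subalgebra of $M$ generated by $\S(M)$, and conversely to see that every Funayama envelope of a Raney extension is a $T_0$-algebra.

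For the forward direction, assume $M$ is a $T_0$-algebra. I first verify that $\S(M)\hookrightarrow M$ is a bounded distributive sublattice embedding: $\S(M)$ is closed under arbitrary meets by definition, and it is closed under finite joins because $M$, being a complete boolean algebra, is a coframe, so finite joins in $M$ distribute across arbitrary meets of opens. The universal property of the boolean envelope then yields a unique boolean homomorphism $\psi\colon B(\S(M))\to M$ extending this inclusion, and the standard fact that a bounded distributive sublattice of a boolean algebra generates a subalgebra isomorphic to its boolean envelope gives that $\psi$ is injective. Each $T_0$-element $s\wedge c$ of $M$ satisfies $c=\neg u$ for some $u\in\O(M)\subseteq\S(M)$, so it lies in $\psi(B(\S(M)))$; combined with join-density of $T_0$-elements in $M$ (by assumption) and closure of $B(\S(M))$ under $\neg$, this will give that $\psi(B(\S(M)))$ is both join-dense and meet-dense in $M$. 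Since $M$ is complete, this identifies $M$ with the MacNeille completion of $B(\S(M))$, i.e., with $\F\S(M)$. Finally, on both sides $\square$ is the right adjoint of the inclusion $\O(M)\hookrightarrow(-)$, so the identification respects $\square$, yielding $M\cong\F\R(M)$ as MT-algebras.

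For the reverse direction, it suffices to verify that $\F R$ is a $T_0$-algebra for every Raney extension $R=(C,L)$. Here $\S(\F R)=C$ and $\C(\F R)=\{\neg u\mid u\in L\}$, so the $T_0$-elements of $\F R$ are precisely the $s\wedge\neg u$ with $s\in C$ and $u\in L$. The plan is to show that every element of $B(C)\subseteq\F C$ is a join of $T_0$-elements; then MacNeille-density of $B(C)$ in $\F C$ will propagate this to all of $\F R$. Writing an element of $B(C)$ in disjunctive normal form gives a finite join of products $s\wedge\neg t_1\wedge\cdots\wedge\neg t_k$ with $s,t_j\in C$. Using meet-density of $L$ in $C$, write each $t_j=\bigwedge_i u_{j,i}$ with $u_{j,i}\in L$; then $\neg t_j=\bigvee_i\neg u_{j,i}$, and distributing the resulting joins through the product expresses it as a join of terms $s\wedge\neg u_{1,i_1}\wedge\cdots\wedge\neg u_{k,i_k}=s\wedge\neg(u_{1,i_1}\vee\cdots\vee u_{k,i_k})$, each a $T_0$-element because $L$ is closed under finite joins.

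The main obstacle I foresee is the bookkeeping in the forward direction: cleanly justifying injectivity of $\psi$ (a standard but delicate fact about boolean envelopes) and verifying that, after identifying $M$ with $\F\S(M)$ as complete boolean algebras, the ``external'' $\square$ on $\F\S(M)$ (defined via the right adjoint of $\O(M)\hookrightarrow\F\S(M)$) agrees with the original $\square$ on $M$. The reverse direction is essentially a computation, straightforward once meet-density of $L$ in $C$ is exploited.
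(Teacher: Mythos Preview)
Your argument is correct and follows essentially the same route as the paper: both reduce to showing that $M$ is $T_0$ iff the boolean subalgebra $\B\S(M)$ is join-dense (hence also meet-dense) in $M$, and then invoke the MacNeille characterization together with the fact that the boolean isomorphism extends the identity on $\S(M)\supseteq\O(M)$, so the two right adjoints defining $\square$ agree. One small point: your assertion $\S(\F R)=C$ in the reverse direction relies on the embedding $C\hookrightarrow\F C$ preserving arbitrary meets, which does follow from the coframe law in $C$ (if $s\le c_i\vee t$ for all $i$ then $s\le\bigwedge_i(c_i\vee t)=(\bigwedge_i c_i)\vee t$) but deserves a word of justification; the paper sidesteps this by carrying out the same normal-form computation directly inside $M$ with $\B\S(M)\subseteq M$, rather than inside a general $\F R$.
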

\begin{proof}
Let $M$ be an MT-algebra. Then the boolean envelope $\B \S(M)$ of $\S(M)$ is isomorphic to the boolean subalgebra of $M$ generated by $\S(M)$ (see, e.g., \cite[p.~99]{BD74}).
Moreover,  $\O(\F \R(M)) \cong \O(M)$ by construction. Thus, it is sufficient to show that $M$ is a $T_0$-algebra iff the boolean algebras $M$ and $\F\S(M)$ are isomorphic. For this it is enough to observe that $M$ is a $T_0$-algebra iff 
$\B \S(M)$ is join-dense in~$M$.\footnote{This follows from the well-known characterization of the MacNeille completion of a poset $P$ as the unique complete lattice $L$ in which $P$ is both join-dense and meet-dense (see, e.g., \cite[p.~237]{BD74}), and the fact that if $L$ is a boolean algebra and $P$ is a boolean subalgebra of $L$, then join-density of $P$ implies its meet-density.} 

($\Rightarrow$) Suppose $M$ is a $T_0$-algebra. Then $T_0$-elements are join-dense in $M$, and hence $\B \S(M)$ is join-dense in $M$ since $T_0$-elements sit inside $\B \S(M)$.

($\Leftarrow$) Suppose $\B \S(M)$ is join-dense in $M$. Since $\B \S(M)$ is the boolean subalgebra of $M$ generated by $\S(M)$, each element of $\B \S(M)$ can be written as 
\[
a = \bigvee_{i=1}^n (s_i \wedge \neg t_i),
\]
where $s_i, t_i \in \S(M)$ (see, e.g., \cite[p.~74]{RS63}). Therefore, it suffices to show that each element of the form $s \wedge \neg t$, with $s, t \in \S(M)$, is a join of $T_0$-elements. Since $t \in \S(M)$, it is a meet of open elements, say $t = \bigwedge u_i$. Thus,
\[
s \wedge \neg t = s \wedge \bigvee \neg u_i = \bigvee (s \wedge \neg u_i)
\]
is a join of $T_0$-elements, yielding that $M$ is $T_0$.
\end{proof}

We next show that spatiality and soberness of a $T_0$-algebra $M$ and its corresponding Raney extension $\R(M)$ go hand-in-hand. For this we recall that the {\em saturation} of $a \in M$ is given by 
\[
\s a = \bigwedge (\upset a \cap \O(M)).
\]

\begin{lemma} 
Let $M$ be a $T_0$-algebra. 
\begin{enumerate}[ref=\thelemma(\arabic*)]
    \item 
    An element $x\in M$ is an atom iff $ \forall a\in \O(M)\,(x\leq a \Longleftrightarrow x\nleq \neg a)$. \label{l:atomchar}
    \item For every completely join-prime element  $p\in\Sat(M)$ there is an atom $x\in M$ with $p=\s x$. \label[lemma]{l: from cjp to atom}
\end{enumerate}
\end{lemma}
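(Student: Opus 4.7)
The plan is to prove (1) first, then apply it to obtain (2).

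For the forward direction of (1), I would invoke the standard fact that in any boolean algebra an atom $x$ satisfies $x\le a$ or $x\le\neg a$ for every $a$, with these alternatives mutually exclusive since $x\ne 0$; this gives the biconditional for every $a\in M$, in particular for every $a\in\O(M)$. For the converse, assume the biconditional on $\O(M)$. Taking $a=1$ forces $x\ne 0$, so the real content is to rule out the existence of a proper nonzero $y<x$. If such $y$ existed, then $y_1:=y$ and $y_2:=x\wedge\neg y$ would be nonzero disjoint elements below $x$, and $T_0$-join density would yield nonzero $T_0$-elements $z_i=s_i\wedge\neg u_i\le y_i$ with $s_i\in\S(M)$ and $u_i\in\O(M)$. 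The goal is then to exhibit an open $a$ with both $x\wedge a\ne 0$ and $x\wedge\neg a\ne 0$, which violates the biconditional at $a$. Writing each $s_i$ as a meet of opens, a trichotomy appears: either some open above $z_1$ fails to lie above $z_2$ (and this open serves as $a$); the symmetric situation; or else $z_1\le s_2$ and $z_2\le s_1$ hold simultaneously. The third case is the main obstacle: here $z_1\wedge z_2=0$ rearranges to $s_1\wedge s_2\le u_1\vee u_2$, which combined with $z_1\le s_1\wedge s_2$ and $z_1\le\neg u_1$ forces $z_1\le u_2$; then $a=u_2$ works since $z_1\le x\wedge u_2$ while $z_2\le x\wedge\neg u_2$.

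For (2), given a completely join-prime $p\in\S(M)$, I would set
\[
q := \bigvee\nolimits_{\S(M)} \{b\in\S(M)\mid p\nleq b\}
\]
and take $x := p\wedge\neg q$ in $M$. Complete join-primeness of $p$ excludes $p\le q$, so $x\ne 0$. The core verification, for each $a\in\O(M)\subseteq\S(M)$, is a clean dichotomy: if $p\le a$ then $x\le p\le a$; and if $p\nleq a$ then $a$ is one of the joinands defining $q$, so $a\le q$, whence $x\wedge a\le\neg q\wedge q=0$, i.e., $x\le\neg a$. These two alternatives are mutually exclusive and exhaustive, so the biconditional of (1) holds at every open $a$, and hence $x$ is an atom by (1). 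The same dichotomy shows $\{a\in\O(M)\mid x\le a\}=\{a\in\O(M)\mid p\le a\}$, and meeting over opens yields $\s x=p$, since $p$ is saturated.
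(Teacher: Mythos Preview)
Your argument is correct, and for part~(2) it is essentially the paper's own approach: both define the candidate atom as $p$ meet the complement of ``everything $p$ does not dominate,'' then verify the open-set characterization from~(1) and deduce $\s x=p$ from the resulting coincidence of open upper bounds. The only cosmetic difference is that the paper takes $\bigvee\{u\in\O(M)\mid p\nleq u\}$ (a join of opens, computed in $M$) while you take $\bigvee_{\S(M)}\{b\in\S(M)\mid p\nleq b\}$; since $p$ is completely join-prime in $\S(M)$ either choice works, and the dichotomy argument is identical.

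For part~(1) the paper simply cites an external reference, whereas you supply a direct proof. Your trichotomy argument is sound: in the first two cases the witnessing open $a$ is immediate, and in the third case the chain $z_1\le s_1\wedge s_2\le u_1\vee u_2$ combined with $z_1\le\neg u_1$ correctly forces $z_1\le u_2$, so that $a=u_2$ separates. This is a genuine addition over what the paper records.
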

 \begin{proof}
     (1) See \cite[Lem.~5.3]{BR+25}.
     
     (2) Let $p\in \Sat(M)$ be completely join-prime.
     We consider 
     \[
     x=p\wedge \neg\bigvee \{u\in \O(M) \mid p\nleq u\}.
     \]
     Because $p$ is completely join-prime, we cannot have $p\leq \bve\{u\in \O(M) \mid p\nleq u\}$. Thus, $x\neq 0$. We use (1) to show that $x$ is an atom. Let $a\in  \O(M)$. First, suppose that $x\nleq a$. Then $p\nleq a$, so $a \le \bigvee \{u\in  \O(M) \mid p\nleq u\}$, and hence 
     \[
     x\le\neg\bigvee \{u\in  \O(M) \mid p\nleq u\}\leq \neg a.
     \]
     Next, suppose that $x\nleq \neg a$. Then $\neg\bigvee \{ u\in  \O(M) \mid p\nleq u\}\nleq \neg a$, yielding that $a \not\le \bigvee \{ u\in  \O(M) \mid p\nleq u\}$. Therefore, we must have $p\leq a$, and so $x\leq a$. 
     
     It is left to show that $p=\s x$. 
     Since $p$ is saturated and $x\leq p$, we have $\s x\leq p$. 
     Suppose $x\leq a$ for some $a\in  \O(M)$. Then
     $x\nleq \neg a$, so
     $\neg\bigvee \{u\in  \O(M) \mid p\nleq u\}\nleq \neg a$. Thus, $a \not\le \bigvee \{u\in  \O(M) \mid p\nleq u\}$, and so $p\leq a$. This proves that $p \le \s x$, hence the equality. 
 \end{proof}
We recall (see \cite[p.~45]{Sua24}) that a Raney extension $R=(C,L)$ is \emph{sober} if for every completely prime filter $P$ of $L$ there is a completely join-prime element $p$ of $C$ such that $P=\up p \cap L$.  

\begin{theorem} \label{M spatial iff RM spatial}\label{M sober iff RM sober}
    Let $M$ be a $T_0$-algebra.
    \begin{enumerate}
        \item $M$ is spatial iff $\R(M)$ is spatial.
        \item $M$ is sober iff $\R(M)$ is sober.
    \end{enumerate}
\end{theorem}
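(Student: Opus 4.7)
The plan is to prove (1) and (2) in parallel using \cref{l: from cjp to atom}, which associates an atom of $M$ to each completely join-prime element of $\S(M)$. The crux is a partial converse: \emph{for every atom $x\in M$, the saturation $\s x$ is completely join-prime in $\S(M)$}. To prove this, suppose $\s x\le\bve_{\S(M)} T$ for $T\se\S(M)$ and $\s x\not\le t$ for all $t\in T$. Since each $t$ is saturated, for each $t$ there is $u_t\in\O(M)$ with $t\le u_t$ and $\s x\not\le u_t$, whence $x\not\le u_t$ and so $x\le\neg u_t$ by \cref{l:atomchar}. The open element $u:=\bve_{\O(M)}\{u_t\mid t\in T\}$ is an open upper bound of $T$, so $\s x\le u$. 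Since $\O(M)$ is closed under arbitrary $M$-joins, the open join equals the $M$-join, giving $\neg u=\bwe_t \neg u_t$ and hence $x\le u\wedge\neg u=0$, a contradiction.

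For (1), the direction ($\Rightarrow$) follows from the key lemma: if $M$ is atomic then for $s\in\S(M)$ the chain $s=\bve_M\{x\mid x\text{ an atom with }x\le s\}\le\bve_{\S(M)}\{\s x\mid x\text{ an atom with }x\le s\}\le s$ shows $\S(M)$ is join-generated by completely join-primes. For ($\Leftarrow$), fix nonzero $a\in M$. Since $M$ is $T_0$, there is a nonzero $T_0$-element $s\wedge\neg t\le a$ with $s\in\S(M)$ and $t\in\O(M)$; in particular $s\not\le t$. Spatiality of $\R(M)$ expresses $s$ as $\bve_{\S(M)}$ of completely join-primes below it, and since $t$ is saturated not all of them can lie below $t$, so fix $p\le s$ completely join-prime with $p\not\le t$. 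By \cref{l: from cjp to atom}, $p=\s x$ for some atom $x$; the equivalence $\s x\le u\Leftrightarrow x\le u$ for open $u$ forces $x\not\le t$, hence $x\le\neg t$ by \cref{l:atomchar}, so $x\le s\wedge\neg t\le a$.

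For (2), I would first record a complementation bijection between join-irreducibles in $\C(M)$ and primes in $\O(M)$: if $c\in\C(M)$ is join-irreducible and $u_1\wedge u_2\le\neg c$, then $c=(c\wedge\neg u_1)\vee(c\wedge\neg u_2)$ in $\C(M)$ forces $u_i\le\neg c$ for some $i$, so $\neg c$ is prime; the converse is analogous. Given a completely prime filter $P$ of $\O(M)$, we have $P=\{u\mid u\not\le q\}$ for the prime open $q:=\bve(\O(M)\setminus P)$; sobriety of $M$ yields an atom $x$ with $\diamond x=\neg q$, and setting $p:=\s x$ (completely join-prime by the key lemma), the chain $u\le q\Leftrightarrow\diamond x\le\neg u\Leftrightarrow x\le\neg u\Leftrightarrow x\not\le u$ (using $\neg u\in\C(M)$ and \cref{l:atomchar}) gives $P=\up p\cap\O(M)$. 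Conversely, given a join-irreducible $c\in\C(M)$, applying sobriety of $\R(M)$ to the completely prime filter $\{u\mid u\not\le\neg c\}$ produces a completely join-prime $p=\s x$ via \cref{l: from cjp to atom}; the same equivalence chain at $u=\neg c$ yields $x\le c$, hence $\diamond x\le c$, and at $u=\neg\diamond x$ rules out $c\not\le\diamond x$, so $c=\diamond x$.

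The main obstacle is the key lemma that $\s x$ is completely join-prime, since joins in $\S(M)$ are taken as meets of open upper bounds rather than as $M$-joins. The argument succeeds because the candidate upper bound $\bve_{\O(M)}\{u_t\}$ actually coincides with the $M$-join (arbitrary joins of opens being open), and because \cref{l:atomchar} converts the atom-ness of $x$ into the clean dichotomy $x\le u$ or $x\le\neg u$ for every open $u$, making the boolean manipulation in $M$ fully available.
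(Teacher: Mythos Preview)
Your proof is correct and follows the same overall strategy as the paper's: both hinge on the correspondence between atoms of $M$ and completely join-prime elements of $\S(M)$ together with the identity $\up x \cap \O(M) = \up \s x \cap \O(M)$.

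The notable difference is that you make explicit and prove the converse direction of this correspondence---that $\s x$ is completely join-prime for every atom $x$---whereas the paper simply asserts a ``bijection'' citing only \cref{l: from cjp to atom} (which supplies the other direction). Your key lemma is exactly what is needed to justify that assertion, and your argument for it (choosing $u_t \in \O(M)$ with $t \le u_t$, $x \not\le u_t$, then using that $\O(M)$ is closed under arbitrary $M$-joins so that $x \le \neg\bigvee u_t$) is clean and correct. For (1)$\Rightarrow$ you use this lemma directly, while the paper shortcuts via the concrete representation $M \cong \P(X)$; for (2) you work from the definition of sobriety through the complementation bijection between join-irreducibles of $\C(M)$ and primes of $\O(M)$, whereas the paper invokes the external characterization \cite[Lem.~5.16]{BR23}. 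The upshot is that your proof is longer but more self-contained, and in one place (the bijection claim) actually more complete than the paper's.
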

\begin{proof}
(1) If $M$ is spatial, then $M$ is the powerset algebra $\P(X)$ for some topological space $X$. Therefore, $\Sat(M)$ is the coframe of saturated sets of $X$ and $\O(M)$ is the frame of opens of $X$. Thus, ${\sf R}(M)$ is spatial.\footnote{Observe that this implication is true for an arbitrary MT-algebra.} 
    Conversely, suppose that ${\sf R}(M)$ is spatial. Let $a \in M$ be nonzero. Since $M$ is $T_0$, there are saturated $s \in M$ and closed $c \in M$ such that $0 \ne s \wedge c \le a$. Set $u = \lnot c$. Then $s \not\le u$. Since ${\sf R}(M)$ is spatial, there is a completely join-prime $p \in \Sat(M)$ such that $p\le s$ but $p \not\le u$. By \cref{l: from cjp to atom},
    there is an atom $x$ of $M$ such that $p = \s x$. Clearly $x \le s$. If $x \not\le c$, then $x \le u$, so $p = \s x \le u$, a contradiction. Thus, $x\le s \wedge c \le a$, and hence $M$ is spatial. 
 
    (2)
    Since $M$ is a $T_0$-algebra, $M$ is sober iff every completely prime filter $P$ of $\O(M)$ is of the form $\up x \cap \O(M)$ for some atom $x\in M$ (see \cite[Lem.~5.16]{BR23}), and $\R(M)$ is sober iff $P = \up p \cap \O(M)$ for some completely join-prime $p \in \S(M)$. By \cref{l: from cjp to atom}, there is a bijection between atoms of $M$ and completely join-prime elements of $\S(M)$ given by $s \mapsto \s x$. Since $\up x \cap \O(M) = \up \s x \cap \O(M)$, the result follows.
\end{proof}

We recall (see, e.g., \cite{BPP14} or \cite{MPS20}) 
that a meet $\bigwedge S$ in $L$ is \emph{strongly exact} if from $s\to a=a$ for each $s\in S$ it follows that $(\bigwedge S)\to a=a$ for all $a\in L$.\footnote{As usual, $s\to a$ denotes the relative pseudocomplement $s\to a=\bigvee\{b\in L \mid s\wedge b \le a\}$.} 
A filter of $L$ is \emph{strongly exact} if it is closed under strongly exact meets. Let $\fse(L)$ be the set of strongly exact filters (ordered by reverse inclusion). We view $L$ as a subset of $\fse(L)$ by identifying elements of $L$ with principal filters.

\begin{proposition}
   For a frame $L$, the pair $(\fse(L),L)$ is a sober Raney extension.\label{e: SE is raney}
\end{proposition}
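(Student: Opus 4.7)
The plan is to verify the three axioms of a Raney extension for $(\fse(L), L)$ and then to establish sobriety by identifying each completely prime filter of $L$ with itself, viewed as a completely join-prime element of $\fse(L)$. I embed $L$ into $\fse(L)$ via principal filters $a \mapsto {\up}a$; since $\fse(L)$ carries the reverse-inclusion order, ${\up}a \le {\up}b$ iff ${\up}a \supseteq {\up}b$ iff $a \le b$, so this embedding is order-preserving.

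Under reverse inclusion, joins in $\fse(L)$ are set-theoretic intersections (intersections of strongly exact filters are strongly exact), while meets are the strongly exact filters generated by unions. I would invoke the results of \cite{MPS20, Sua24} to obtain the coframe structure on $\fse(L)$. With this in hand, the first two Raney extension axioms are routine: $L$ is closed under arbitrary joins in $\fse(L)$ since $\bigcap_i {\up}a_i = {\up}\bigvee_i a_i$, and under finite meets since the strongly exact filter generated by ${\up}a \cup {\up}b$ is ${\up}(a \wedge b)$. Meet-density of $L$ in $\fse(L)$ holds because, for each $F \in \fse(L)$, the meet in $\fse(L)$ of $\{{\up}a \mid a \in F\}$ is the strongly exact filter generated by $\bigcup_{a \in F} {\up}a = F$, which already equals $F$.

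The main technical obstacle is exactness of $L$-joins in $\fse(L)$: for $F \in \fse(L)$ and $\{a_i\} \subseteq L$, we must show
\[
F \wedge {\up}\bigl(\bigvee_i a_i\bigr) \;=\; \bigcap_i \bigl(F \wedge {\up}a_i\bigr),
\]
where $\wedge$ denotes the meet in $\fse(L)$. The inclusion $\supseteq$ is straightforward: the right-hand side is a strongly exact filter containing $F$ and $\bigcap_i {\up}a_i = {\up}\bigvee_i a_i$, hence contains the strongly exact filter generated by $F \cup \{\bigvee_i a_i\}$, which is the left-hand side. The reverse inclusion is substantive and follows from the distributive behaviour of strongly exact meets over joins from $L$, as developed in the cited references.

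For sobriety, let $P$ be a completely prime filter of $L$. Then $L \setminus P$ is closed under arbitrary joins and is downward closed, so $p_0 := \bigvee(L \setminus P)$ satisfies $L \setminus P = {\downarrow}p_0$, making $p_0$ meet-prime. To see $P \in \fse(L)$, take $S \subseteq P$ with $\bigwedge S$ a strongly exact meet; meet-primeness gives $s \to p_0 = p_0$ for each $s \in S$, so strong exactness of $\bigwedge S$ yields $(\bigwedge S) \to p_0 = p_0$, forcing $\bigwedge S \not\le p_0$, i.e., $\bigwedge S \in P$. To see $P$ is completely join-prime in $\fse(L)$: if $P \supseteq \bigcap_i F_i$ yet no $F_i \subseteq P$, choose $x_i \in F_i \setminus P$ for each $i$; then $x_i \le p_0$, and upward closure of $F_i$ forces $p_0 \in F_i$ for all $i$, whence $p_0 \in \bigcap_i F_i \subseteq P$, a contradiction. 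Finally, ${\up}P \cap L = \{a \in L \mid {\up}a \subseteq P\} = P$ under our identification.
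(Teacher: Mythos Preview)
Your proof is correct and aligns with the paper's approach. Both defer the Raney-extension structure (in particular the coframe law and exactness of $L$-joins in $\fse(L)$) to the literature, and both establish that completely prime filters are strongly exact via the identical argument using the meet-prime $p_0=\bigvee(L\setminus P)$ and the equivalence $a\in P \iff a\to p_0=p_0$. The only notable difference is in the packaging of sobriety: the paper invokes a characterization from \cite{Sua24} reducing sobriety to ``every completely prime filter of $L$ lies in $\fse(L)$'', whereas you verify directly that $P$ is a completely join-prime element of $\fse(L)$ with ${\up}P\cap L=P$. Your route is slightly more self-contained (and the ``choose $x_i$'' step needs no choice: from $F_i\not\subseteq P$ one concludes $p_0\in F_i$ without selecting a witness), while the paper's route isolates the reusable fact that completely prime filters are strongly exact.
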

\begin{proof}
    By \cite[Prop.~3.10]{Sua25}, $(\fse(L), L)$ is a Raney extension.
    To see that $(\fse(L) , L)$ is sober, it suffices by 
    \cite[Prop.~3.20]{Sua24}
    to observe that every completely prime filter of $L$ is strongly exact. 
    This follows from \cite[p.~12]{Sim2004}, where the terminology of admissible filters is used. We include a short alternate proof below. 

    Let  $P\se L$ be a completely prime filter, and let $m := \bigvee \{ a \in L \mid a \notin P \}$ be the corresponding meet-prime element.\footnote{Recall (see, e.g., \cite[p.~13]{PP12}) that $m \in L \setminus \{1\}$ is {\em meet-prime} if $a \wedge b \le m$ implies $a \le m$ or $b \le m$ for all $a,b \in L$.} Then $a\in P$ iff $a\nleq m$. Since $m$ is meet-prime, we also have $a\nleq m$ iff $a\to m=m$. Let $S\subseteq P$ be a family whose meet is strongly exact. Since $s \not\le m$, we have $s\to m=m$ for each $s \in S$. Therefore, $\left(\bwe S\right)\to m=m$ because $S$ is strongly exact, so $\bwe S\nleq m$, and hence $\bwe S \in P$. Thus, $P \in \FiltSE(L)$.
\end{proof}

\begin{remark}
    Recall (see, e.g., \cite[p.~137]{PP12}) that a filter $F$ of a frame $L$ is \emph{Scott open} if for each directed set $S \subseteq L$, from $\bigvee S \in F$ it follows that $F \cap S \neq \varnothing$. 
    By \cite[Lem.~3.4(2)]{Joh85}, Scott open filters are strongly exact. Since every completely prime filter is Scott open, the above result follows. However, Johnstone's proof is rather complicated. 
    A simpler proof was given in \cite[Prop.~5.2]{JS25}, utilizing Zorn's Lemma. 
    We now indicate how to derive Johnstone's result from the above simple observation that every completely prime filter is strongly exact by only using the Prime Ideal Theorem (PIT), which is strictly weaker than Zorn's Lemma.
    For this it is sufficient to observe that PIT implies that  
    every Scott open filter is an intersection of completely prime filters (see, e.g., \cite[p.~265]{Ern18}). Therefore, since intersections of strongly exact filters are  strongly exact, we conclude that strong exactness of Scott open filters follows from that of completely prime filters.
\end{remark}

We recall that $a \in L$ is {\em dense} if $a^* = 0$, and that the set $D(L)$ of all dense elements of $L$ is a filter (see, e.g., \cite[p.~131]{RS63}).

\begin{proposition}
   For a frame $L$, $D(L)$ is a strongly exact filter.
\end{proposition}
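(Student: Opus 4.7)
The plan is to apply the definition of strong exactness directly with $a = 0$. Since the paper has already established that $D(L)$ is a filter, the only remaining task is to show that $D(L)$ is closed under any meet $\bigwedge S$ that is strongly exact, whenever $S \subseteq D(L)$.

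So let $S \subseteq D(L)$ be such that $\bigwedge S$ is a strongly exact meet in $L$. For each $s \in S$, the density $s^* = 0$ unpacks as $s \to 0 = 0$. Applying the strong exactness hypothesis with the choice $a = 0$, we conclude $(\bigwedge S) \to 0 = 0$, that is, $\left(\bigwedge S\right)^* = 0$. Hence $\bigwedge S \in D(L)$, so $D(L)$ is a strongly exact filter.

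No obstacle is anticipated here: the statement is essentially an observation that the defining condition of strong exactness, specialized at the bottom element $0$, is exactly the statement that density is preserved under strongly exact meets. The only thing one needs to keep straight is the convention $u^* = u \to 0$ and the already-cited fact that $D(L)$ is a filter in the first place.
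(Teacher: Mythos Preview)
Your proof is correct and matches the paper's own argument essentially verbatim: both take $a=0$ in the definition of strong exactness to pass from $s\to 0=0$ for each $s\in S$ to $(\bigwedge S)\to 0=0$.
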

\begin{proof}
    Let $S\subseteq D(L)$ be strongly exact. 
    Then, since $s\to 0=0$ for each $s\in S$,
    strong exactness yields $(\bwe S)\to 0=0$. 
    Thus, $\bwe S \in D(L)$. 
\end{proof}

As an immediate consequence of the above, we obtain:

\begin{corollary}\label{c: dense opens are strongly exact}
    For a topological space $X$, the set of its dense opens is a strongly exact filter of $\Om(X)$.
\end{corollary}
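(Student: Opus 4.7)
The plan is to derive this directly from the preceding proposition by specializing to the frame $L=\Omega(X)$. What needs to be checked is simply that the notion of dense element of the frame $\Omega(X)$ coincides with the topological notion of a dense open subset of $X$.

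First I would recall that in the frame $\Omega(X)$, the pseudocomplement of an open set $U$ is given by $U^{*}=\operatorname{int}(X\setminus U)$, so $U^{*}=\varnothing$ holds precisely when $X\setminus U$ has empty interior, i.e., when $U$ is topologically dense. Hence $D(\Omega(X))$ is exactly the collection of dense opens of $X$. Then I would apply the previous proposition to $L=\Omega(X)$ to conclude that $D(\Omega(X))$ is a strongly exact filter of $\Omega(X)$.

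Since this is a one-line specialization, there is no real obstacle; the only thing worth stating explicitly in the write-up is the identification of frame-theoretic denseness with topological denseness, which is a standard fact and requires no computation beyond unwinding the definition of $U^{*}$ in $\Omega(X)$.
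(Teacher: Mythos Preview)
Your proposal is correct and matches the paper's approach exactly: the paper states the corollary as ``an immediate consequence'' of the preceding proposition with no further proof, and your explicit identification of $D(\Omega(X))$ with the topologically dense opens via $U^{*}=\operatorname{int}(X\setminus U)$ is the natural (and only) thing to add.
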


\begin{lemma}
Let $X$ be a topological space.
\begin{enumerate}[ref=\thelemma(\arabic*)]
    \item If $X$ is sober and singletons are nowhere dense, then $D(\Om(X))$ is not contained in any completely prime filter. \label[lemma]{l: D has no CP above it}
    \item 
    If $X$ is Hausdorff and dense-in-itself, then $D(\Om(X))$ is not contained in any completely prime filter. \label[lemma]{c: D has no CP above it}
\end{enumerate}
\end{lemma}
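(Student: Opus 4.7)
The plan is to reduce both statements to the standard identification of points of a sober space with completely prime filters of its frame of opens. Recall that if $X$ is sober, then every completely prime filter of $\Om(X)$ has the form $N_x := \{U \in \Om(X) \mid x \in U\}$ for a unique $x \in X$. So for (1) it suffices to produce, for each $x \in X$, a dense open set that does not contain $x$.

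To exhibit such a witness I would simply set $U_x := X \setminus \cl\{x\}$. The hypothesis that $\{x\}$ is nowhere dense unpacks to $\intr(\cl\{x\}) = \emptyset$, so the complement of $U_x$ has empty interior, meaning $U_x$ is dense open; thus $U_x \in D(\Om(X))$. On the other hand $x \in \cl\{x\}$, so $x \notin U_x$ and hence $U_x \notin N_x$. Since $x$ was arbitrary, $D(\Om(X))$ is not contained in any $N_x$, and sobriety finishes (1).

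For (2), I would deduce the conclusion from (1) via two standard observations. First, every Hausdorff space is sober, so the sobriety hypothesis of (1) is automatic. Second, a Hausdorff dense-in-itself space has nowhere dense singletons: Hausdorffness implies $T_1$, giving $\cl\{x\} = \{x\}$, while dense-in-itself means that no point is isolated, giving $\intr\{x\} = \emptyset$; hence $\intr(\cl\{x\}) = \emptyset$. Both hypotheses of (1) are then satisfied.

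The argument is elementary once sobriety is invoked, and I do not anticipate a real obstacle; the only substantive ingredient is the bridge from frame-theoretic completely prime filters to points, after which the topological side is a one-line calculation.
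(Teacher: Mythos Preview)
Your argument is correct and matches the paper's proof essentially line for line: both use sobriety to identify completely prime filters with point-neighborhood filters $N(x)$, exhibit $X\setminus\cl\{x\}$ as a dense open missing $x$, and reduce (2) to (1) via the standard facts that Hausdorff spaces are sober and that dense-in-itself $T_1$-spaces have nowhere dense singletons.
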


\begin{proof}
 (1) Since $X$ is sober, completely prime filters of the frame $\Omega(X)$ are of the form 
 \[
 N(x) := \{ U \in \Om(X) \mid x \in U \}
 \]
 for some $x\in X$ (see, e.g., \cite[Prop.~I.1.3.1]{PP12}). If $D(\Om(X))\se N(x)$
 for some $x\in X$, then $x$ is contained in all dense opens. 
 But since $\{x\}$ is nowhere dense, 
 $\overline{\{x\}}^c$ is a dense open not containing $x$, a contradiction.

 (2) Let $X$ be Hausdorff. Then $X$ is sober (see, e.g., \cite[p.~2]{PP12}). Since $X$ is dense-in-itself, all singletons in $X$ are nowhere dense. Thus, (1) applies.
\end{proof}

We are finally ready to give an example of 
a locally compact sober $T_0$-algebra which is not spatial.
\begin{example} \label{main example}
    Let $X$ be any locally compact Hausdorff dense-in-itself space. For example, we can take $X$ to be $\mathbb R$.
    The pair $R := (\fse(\Om(X)), \Om(X))$ is a sober Raney extension by \cref{e: SE is raney}. 
    To show that $R$ is not spatial, recall from \cite[Prop.~4.9]{Sua25} that $R$ is spatial iff every $F \in \FiltSE(\Om(X))$ is an intersection of completely prime filters of $\Om(X)$. 
    Consider the filter $D(\Om(X))$. By \cref{c: dense opens are strongly exact}, $D(\Om(X))$ is strongly exact; and by \cref{c: D has no CP above it}, there is no completely prime filter containing it. Thus, $R$ is not spatial.
    
    Now consider the Funayama envelope $M = \F R$. Then $\R(M) = R$, so $M$ is sober because $R$ is sober, and $M$ is nonspatial because $R$ is nonspatial (see \cref{M spatial iff RM spatial}). Moreover, $\O(M) = \Om(X)$ is locally compact since $X$ is locally compact, and hence $M$ is locally compact by \cite[Thm.~4.7(2)]{BR25}.
\end{example}

In conclusion, we return to the first two open questions posed at the end of \cite{BR23}.
The first question  asks whether a sober MT-algebra must be spatial if its frame of opens is. \cref{main example} 
provides a nonspatial locally compact sober MT-algebra $M$ whose frame of opens $\O(M)$ is spatial, thus answering the question in the negative (and also confirming the suspicion of \cite[Rem.~4.14]{BR25} that such examples exist). 

The second question asks whether a $T_0$-algebra $M$ with $\O(M)$ a subfit Hausdorff frame must be $T_2$. This is known to hold when $M$ is spatial 
(see \cite[Prop.~I.3.2]{PP21}).
\cref{main example} provides a counterexample: although $\O(M) = \Om(X)$ is Hausdorff and subfit (as it is the frame of opens of a Hausdorff space), $M$ is not even $T_\D$, let alone $T_2$. (If it were $T_\D$, it would be spatial by \cref{thm: td spatial}.) 
Moreover, the example in 
\cref{rem: NT2 and T2} indicates that
this failure persists even when $M$ is $T_1$ since in that case $\O(M)$ is Hausdorff by \cref{NT2 implies Hausdorff frame}. Thus, the spatiality assumption cannot be omitted even when restricting to $T_1$-algebras.

\addtocontents{toc}{\SkipTocEntry}
\section*{Acknowledgements}

We are very thankful to Alan Dow for communicating to us that each compact Hausdorff extremally disconnected space is realized as the remainder of the Stone--\v{C}ech compactification of some completely regular extremally disconnected space.

\bibliographystyle{alpha-init}
\bibliography{refs}

\end{document}